\documentclass[11pt,a4paper]{article}

\usepackage[a4paper,top=3.8cm,bottom=3.8cm,left=3cm,right=3cm]{geometry}%
\usepackage{hyperref}

\usepackage[T1]{fontenc}
\usepackage{amsmath,amssymb,mathtools,amsthm,amsopn}
 \usepackage{mathpazo}
\usepackage{graphicx}
\usepackage[usenames,dvipsnames]{xcolor}
  \usepackage{color}

\usepackage{tocloft}
\setlength{\cftbeforesecskip}{-0.5ex}
\setlength{\cftbeforesubsecskip}{-0.5ex}
\setlength{\cftbeforesubsubsecskip}{-0.5ex}

\newcommand{\cut}{\textup{cut}}

\DeclareMathOperator{\Cut}{Cut}

\renewcommand{\Im}{\operatorname{Im}}

\usepackage{fancyhdr}
\usepackage{fourier-orns}

\usepackage{enumitem}
\usepackage{cite}

\usepackage[dayofweek]{datetime}
\usepackage{mathrsfs}
 \setcounter{tocdepth}{1}

\renewcommand{\phi}{\varphi}

\newcommand{\step}[1]{\par\medskip\noindent\it#1\rm}

\newcommand{\wh}{\widehat}
\renewcommand{\H}{\mathbb{H}}

\newcommand{\e}{\varepsilon}

\newcommand{\s}{\sigma}

\newcommand{\la}{\lambda}

\newcommand{\wt}{\widetilde}

\renewcommand{\d}{\delta}

\newcommand{\p}{\partial}

\newtheoremstyle{pippo}  
  {}       
  {}       
   {\sffamily}   
 {}        
  {\bfseries}  
  {.}   
  {1ex}       
  {}           
\newtheoremstyle{pluto}  {}{}
{\slshape}  {}{\bfseries}  {.} {1ex}    {}

\newtheorem{theorem}{Theorem}[section]
\newtheorem{proposition}[theorem]{Proposition}

\newtheorem{lemma}[theorem]{Lemma}

\theoremstyle{pluto}
\newtheorem{definition}[theorem]{Definition}
\newtheorem{remark}[theorem]{Remark}
\newtheorem{example}[theorem]{Example}

\usepackage{mdwlist}

\newcommand{\R}{\mathbb{R}}
\newcommand{\G}{\mathbb{G}}
\newcommand{\F}{\mathbb{F}}
\newcommand{\N}{\mathbb{N}}

\renewcommand{\d}{\delta}
\renewcommand{\t}{\tau}

\renewcommand{\a}{\alpha}
\renewcommand{\b}{\beta}

\DeclareMathOperator{\Span}{span}

\numberwithin{equation}{section}
\numberwithin{figure}{section}
\pagestyle{fancy}

\chead{  \scriptsize{A. Montanari and D. Morbidelli,     \qquad  Sub-Riemannian cut time and cut locus ...
  \quad --\quad [\today]
}}
\lhead{}
\rhead{}

\cfoot{{\thepage}}

\let\oldbibliography\thebibliography
\renewcommand{\thebibliography}[1]{%
  \oldbibliography{#1}%
  \setlength{\itemsep}{0pt}%
}

\usepackage{titlesec}
\titleformat{\section}{%
\normalfont\large\bfseries}{\thesection.}{1em}{}
\titleformat{\subsection}{%
\normalfont\normalsize\bfseries}{\thesubsection.}{1em}{}

\begin{document}

\title{Sub-Riemannian cut time and cut locus in Reiter-Heisenberg groups
 \thanks{2020 Mathematics Subject Classification. 53C17; 53C22;
  49K15.
Key words and Phrases.    Carnot groups, Cut locus,  Sub-Riemannian geodesic. }}
\author{Annamaria Montanari  \thanks{Dipartimento di Matematica, Alma Mater Studiorum Universit\`a di Bologna, Italy. Email address \texttt{annamaria.montanari@unibo.it}}
\and Daniele Morbidelli \thanks{Dipartimento di Matematica, Alma Mater Studiorum Universit\`a di Bologna, Italy. Email address  \texttt{daniele.morbidelli@unibo.it}}
}

\date{}

\maketitle

 \tableofcontents

 \begin{abstract}
We study the sub-Riemannian cut time and cut locus  of a given point in a class of step-2  Carnot groups of Reiter-Heisenberg type. Following the Hamiltonian point of view, we  write and analyze extremal curves, getting  the cut time of any   of them,      and a precise description of the set of cut points.
 \end{abstract}

\section{Introduction}
In this paper we study the sub-Riemannian cut time and cut locus (of the origin) in a class of step-2  Carnot groups of Reiter-Heisenberg type. Following a Hamiltonian point of view, we will write extremal curves, and we will identify cut points as either points reached by two different minimizing geodesics, or conjugate points.  In spite of the apparent similarity   with the  well known  Heisenberg group,
we will see that these more general models display several different features.

In order to state our results, let us describe briefly Reiter-Heisenberg groups.
Let $V_1=\R^{q\times p}\times \R^{p\times 1}$ and $V_2=\R^{q\times 1}$.  On $ V_1\times V_2$ define the operation
\begin{equation}\begin{aligned}
\label{reiter}
(x,y, t)\cdot(\xi, \eta, \t): =& \Big(x+\xi, y+\eta, t+\t+Q((x,y), (\xi, \eta))\Big)
\\ : = &\Big(x+\xi, y+\eta, t+\t+\frac 12(x\eta-\xi y)\Big).
\end{aligned}\end{equation}
It turns out that $\G_{qp}=(V_1\times V_2, \cdot)$ is a step-2 nilpotent  Carnot group (see~\cite{BonfiglioliLanconelliUguzzoni}, \cite{AgrachevBarilariBoscain}).   Following the terminology in   \cite{Reiter74,TorresLopera88} and \cite{Martini15}, we call it a Reiter-Heisenberg group. These models are a significant generalization of the familiar Heisenberg group which involves nontrivial interesting issues. For example, we will see that for any $q\geq 2$, the group $\G_{qp}$ contains abnormal geodesics, while for $q=1$, $\G_{1p}$ is the standard Heisenberg group,   where  minimizers are always strictly normal.

To equip $(\G_{qp}, \cdot)$ with a sub-Riemannian structure, we introduce on $V_1$ the inner product
  $\langle(x,y), (\xi, \eta) \rangle:=
\operatorname{trace}x^T\xi+ \operatorname{trace}y^T\eta =\operatorname{trace}x^T\xi+   y^T  \eta $.
    On matrix spaces, we shall always use the Hilbert-Schmidt inner product $\langle a, b\rangle :=\operatorname{trace} a^Tb$ for all $a,b\in\R^{\mu\times\nu}$ and for all $\mu,\nu\in\N$.

In order to write length-minimizing curves, we will adopt  the Hamiltonian point of view (see~\cite{AgrachevBarilariBoscain}, see Section~\ref{curvette}).  Denote by $H((x,y,t), (\xi, \eta, \t))$ the quadratic Hamiltonian for normal extremals (which will be written in Section~\ref{curvette}). Extremal curves are  smooth and are  parametrized by $(\xi, \eta, \t)\in T^*_0(V_1\times V_2)\simeq V_1\times V_2$. We denote them by $\gamma(\cdot, \xi, \eta, \t):\R\to V_1\times V_2$. We always assume $\gamma(0,\xi,\eta,\t)=(0,0,0)$
(different starting points can be easily managed by group translations, see~\cite{BonfiglioliLanconelliUguzzoni}).  The (constant) horizontal speed of such path is by definition $|(\xi,\eta)|=\sqrt{|\xi|^2+|\eta|^2}$.
We denote by $t_{\rm{cut}}\in\left]0, +\infty\right]$ the related cut time
\begin{equation*}
 t_{\rm{cut}}=t_{\rm{cut}}(\xi, \eta, \t):=\sup\{\bar s>    0:\gamma(\cdot,\xi, \eta, \t) \text{ minimizes length on $[0, \bar s]$ }
 \}.
\end{equation*}

To state our first result, for    $\t\in\R^q\setminus\{0\}$, introduce the notation
  $P_\t=\frac{\t\t^T}{|\t|^2 }\in\R^{q\times q}$ to denote
  the orthogonal projection on $\Span\{\t\}\subset\R^q$.
Let also  $P_\t^\perp: = I_q-P_\t $  be the orthogonal projection along $\t^\perp$.
On a matrix $x=[x_1, \dots, x_p]\in\R^{q\times p}$,   where
$x_k\in\R^{q\times 1}\simeq\R^q$ for $k=1,\dots, p$ are the columns of~$x$, the operators   $P_\t x$  and   $P_\t^\perp x$  project separately each column of $x$.

  Then we have the following theorem.
 \begin{theorem}[Cut-time]\label{enuncia}
Consider $(\xi, \eta)\in V_1\setminus \{0\}$ and $\t\in\R^q\setminus \{0\}$. Assume also that  $|\eta|+|P_\t\xi|>0$. Then, the length-extremal $s\mapsto \gamma (s, \xi, \eta, \t)$ such that $\gamma(0, \xi, \eta, \t)=(0,0,0)$ minimizes length up to
\begin{equation}\label{azione}
 t_{\rm cut}(\xi,\eta,\t)=\frac{2\pi}{|\t|}.
\end{equation}
Cut points of the origin can be consequently described as points of the form $\gamma(\frac{2\pi}{|\t|}, \xi, \eta, \t)$ with $(\xi, \eta, \t)$ satisfying the previous requirements.
 \end{theorem}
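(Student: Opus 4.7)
The plan is to work throughout within the Hamiltonian formalism of Section~\ref{curvette}. The first step is to integrate explicitly the ODE system for the normal extremals starting at the origin with initial covector $(\xi,\eta,\t)$. Since $\t\in\R^q$ is a first integral of the Hamiltonian flow, the horizontal velocity obeys a closed linear system. With respect to the orthogonal decomposition $\R^q=\Span\{\t\}\oplus\t^\perp$, the dynamics uncouple: $P_\t^\perp x$ moves by pure translation in $\t^\perp\otimes\R^p$ at constant velocity $P_\t^\perp\xi$, while the pair $(P_\t x, y)$ satisfies a linear oscillator of angular frequency $|\t|$ in a $2p$-dimensional ``Heisenberg-type'' subspace. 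As a consequence, at the critical time $T:=2\pi/|\t|$ one has $P_\t x(T)=0$ and $y(T)=0$ independently of $(P_\t\xi,\eta)$, while only $P_\t^\perp x(T)=T\,P_\t^\perp\xi$ and the vertical coordinate $t(T)$ depend non-trivially on the initial covector.

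For the upper bound $t_{\rm cut}\le T$, the plan is to exhibit a non-trivial family of normal extremals reaching $\gamma(T,\xi,\eta,\t)$ with the same length, a Maxwell-type argument. The natural first candidate is a $U(1)$-rotation acting on the pair $(P_\t\xi,\eta)\in\R^p\times\R^p$ viewed as a single complex $p$-vector: because the oscillator $(P_\t x, y)$ closes up at $T$, such rotations automatically preserve the horizontal endpoint and the $\Span\{\t\}$-component of $t(T)$, which depends only on the invariant $|P_\t\xi|^2+|\eta|^2$. When $P_\t^\perp\xi\neq 0$ the $\t^\perp$-component of $t(T)$ is altered by a cross-coupling term, so the bare $U(1)$-family has to be enlarged, for instance by allowing $\t$ itself to vary on the sphere $\{|\t'|=|\t|\}$ with a simultaneous correction of $(\xi',\eta')$. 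The hypothesis $|\eta|+|P_\t\xi|>0$ is precisely what makes the resulting family of competitors non-degenerate; its negation forces $\gamma$ to be the abnormal straight line $s\mapsto sP_\t^\perp\xi$ in $\t^\perp\otimes\R^p$, which minimizes for all time. For the matching lower bound $t_{\rm cut}\ge T$, I would verify that $\gamma|_{[0,T]}$ is still a length minimizer by analyzing the differential of the exponential map $E_s(\xi,\eta,\t):=\gamma(s,\xi,\eta,\t)$ on $(0,T)$ and showing it has full rank, so that no conjugate point arises before $T$. Combining this with the Maxwell coincidence at $T$ and the general principle that the cut time equals the minimum of the first Maxwell and first conjugate times then yields $t_{\rm cut}(\xi,\eta,\t)=T$.

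The main obstacle I expect is the construction and verification of the Maxwell family in the presence of the coupling between the oscillatory and translational parts of the motion. In the classical Heisenberg setting the coordinate $P_\t^\perp\xi$ is absent, and the naive $U(1)$-rotation immediately produces equal-length competitors; here, however, the $\t^\perp$-component of $t(T)$ contains a cross term linear in both $P_\t^\perp\xi$ and $P_\t\xi$ that must be cancelled by an independent perturbation of the covector, and the trigonometric bookkeeping for the vertical endpoint is the technical heart of the proof. A secondary difficulty is ensuring \emph{global} rather than merely local length-minimality of $\gamma|_{[0,T]}$; shorter abnormal or normal competitors must be excluded, a task for which the hypothesis $|\eta|+|P_\t\xi|>0$ is tailored, since it rules out the degenerate regime in which abnormal extremals could compete.
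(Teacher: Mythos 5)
Your strategic outline follows the right thread — integrate the Hamiltonian flow, split along $\Span\{\t\}\oplus\t^\perp$, exploit the closure of the oscillatory part at $T=2\pi/|\t|$ — and your description of the explicit extremals matches Proposition~\ref{monte}. But there is a genuine gap in the upper-bound argument that cannot be repaired along the lines you propose, and the lower bound is left at the level of a plan that does not match what is actually required.

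\emph{Upper bound.} You rest the estimate $t_\cut\le T$ on a Maxwell-type argument: produce a second unit-speed extremal arriving at $\gamma(T,\xi,\eta,\t)$ in time $T$. When $\eta\ne 0$ this works (replace $\eta$ by $R\eta$, $R\in O(p)$), and it is precisely what the paper uses for the residual case $p=1$, $q\ge2$, $\eta P_\t^\perp\xi\ne 0$. However, the paper's Proposition~\ref{masso} proves that for a non-negligible set of initial covectors — namely $\eta=0$ with $\xi^T\t$ orthogonal to $\ker P_\t^\perp\xi$ — the point $\gamma(T,\xi,0,\t)$ is reached by a \emph{unique} unit-speed minimizer. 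At those cut points no Maxwell competitor exists, so the upper bound cannot come from a Maxwell argument and must instead be established via conjugate points; this is exactly what Propositions~\ref{massetto} and~\ref{piuno} do. Your suggested remedy (enlarge the $U(1)$-family by letting $\t$ vary on its sphere) does not help: the first projection step of the paper's Proposition~\ref{masso} shows that any equal-time equal-speed competitor reaching $\gamma(T,\xi,\eta,\t)$ must have $\t'=\t$, so the freedom you propose is forced to collapse. In short, you are missing the conjugate-point half of the argument, and without it the upper bound fails on an entire stratum of covectors.

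\emph{Lower bound.} Showing that the differential of $(\xi,\eta,\t)\mapsto\gamma(s,\xi,\eta,\t)$ has full rank for $s\in(0,T)$ rules out conjugate times before $T$ and hence local-minimality loss, but it does not by itself give \emph{global} minimality up to $T$; one must also exclude Maxwell points on $(0,T)$, which your outline does not address. The paper's Proposition~\ref{superbasso} takes a different, self-contained route (following Barilari–Boscain–Gauthier): fix $\hat s<T$, consider the auxiliary optimal control problem of maximizing $\langle\bar\t,t(\hat s)\rangle$ among unit-speed horizontal curves reaching the prescribed horizontal endpoint, prove by compactness that an optimizer exists and must be a length-minimizer, apply Pontryagin's principle with the transversality condition to pin down its form $e^{-sA_{2\nu\bar\t}}(\xi,\eta)$, and finally show that the monotonicity of $\nu\mapsto(\sin(\nu\hat s|\bar\t|)/(\nu\hat s|\bar\t|))^2$ on the relevant interval forces $\nu=\tfrac12$ and recovers the original extremal. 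This directly certifies minimality on $[0,\hat s]$ for every $\hat s<T$ without invoking the ``cut time $=\min(\text{first conjugate},\text{first Maxwell})$'' principle, which you would otherwise have to verify in full in this sub-Riemannian setting.

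So the overall skeleton is recognizable, but the upper bound needs the conjugate-point computations (not a Maxwell family, which does not exist at some cut points), and the lower bound needs either a careful exclusion of earlier Maxwell points or, as in the paper, a direct control-theoretic minimality proof.
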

If one of the assumptions $\t\neq 0$ and $|\eta|+|P_\t\xi|>0$ is violated, then we have an extremal of the form $\gamma(s,\xi, \eta, 0)=(s\xi, s\eta, 0)$ which minimizes length globally (see Remark~\ref{notflat}).
Some of these lines are normal minimizers, some other are abnormal  (abnormal extremal curves will be identified
in~Proposition~\ref{sedia}).

To comment on Theorem~\ref{enuncia}, let us look at the ``horizontal speed vector'' $    (\dot x, \dot y)$ of a given extremal $\gamma(\cdot, \xi, \eta, \t)$.  We have
$,
 (\dot x(s), \dot y(s))= a(\xi, \eta)\cos(|\t|s)+ b(\xi, \eta)\sin (|\t|s)+z(\xi, \eta)
$, for suitable functions $a,b,z$ of the variables $\xi,\eta$ (see Section~\ref{preliminari}).
The cut time corresponds to a complete period  of the circular functions. This agrees with the standard Heisenberg group and with Heisenberg-type groups~\cite{AutenriedMolina16}.
There is however a constant part $z(\xi,\t)$ which, if nonzero, makes the ``horizontal part'' $(x,y)$ of a cut point nonzero.   Previous results with similar features on cut times for Carnot groups of step two were proved   in \cite{AgrachevBarilariBoscain12,BarilariBoscainGauthier12} for structures of low corank (at most two). The case of  free, step-2 Carnot groups is dealt in  \cite{Myasnichenko02,MontanariMorbidelli17,RizziSerres16}. Here, working in model~\eqref{reiter},  we have no bounds on the corank $\dim(V_2)$. The comprehensive survey paper~\cite{Sachkov22} should be consulted to have a complete account on the mentioned models and on sub-Riemannian manifolds outside the setting of Carnot groups.  In the recent papers \cite{Li} and \cite{LiZhang}, the authors  analyze the cut-time in a rather large class of step-two Carnot groups, including the limiting case $\G_{q1}$ of the  family of models $\G_{qp}$ object  of the present paper.

Our second result involves the description of the cut locus as a set.  It turns out
from Theorem~\ref{enuncia}
and from the form of extremals written in Section~\ref{dasopra},    that cut points have the form
 \begin{equation*}
\left\{
\begin{aligned}
 x\Big(\frac{2\pi}{|\t|},\xi,\eta,\t\Big)& =
 \frac{2\pi}{|\t|} P_{\tau}^\perp \xi
\\ y\Big(\frac{2\pi}{|\t|},\xi,\eta,\t\Big) &=0
\\
t\Big(\frac{2\pi}{|\t|},\xi,\eta,\t\Big)& =
\frac{\pi}{|\t|^2}
\Big(|P_\t\xi|^2
+|\eta|^2\Big)\frac{\t}{|\t|}
 -\frac{2\pi}{|\t|^2}  P_\t^\perp \xi \frac{\xi^T\t}{|\t|}
\end{aligned}
\right.
 \end{equation*}
where $\t\neq 0$ and  $|\eta|^2+|P_\t\xi|^2>0$.   Note first that if $q=1$ and $\t\in\R^1\setminus\{0\}$, trivially we have $\t^\perp=\{0\}$ and $P_\t^\perp=0$. Then cut points have the form $(0,0,t)$, i.e.~they are contained in the $t$-axis. This is the familiar case of the Heisenberg group $\H^p$ (See \cite[Chap.~13]{AgrachevBarilariBoscain}). If instead $q>1$, then $P_\t^\perp$ is always nonzero and we have points $(x,0,t)\in \Cut(\G_{qp})$ with $x\neq 0$.
 The natural question is now whether or not we may have $\Cut(\G_{qp})=\{(x,0,t)\in V_1\times V_2\}$, for some choices of $q$ and $p$.
The answer is not, by the following theorem.   To state it, given $x\in\R^{q\times p}$, introduce
 $P_{\Im x} $ and   $P_{\Im x}^\perp :\R^q\to \R^q $ to
 denote respectively  the orthogonal projection on $\Im x:=\{xy:y\in\R^p\}\subset\R^q$ and on its orthogonal.
 The linear map $x^\dag\in \R^{p\times q}$ denotes instead the Moore-Penrose inverse of $x$ (see below).
\begin{theorem}[Identification of the cut locus]\label{cutolo}
Let $q,p\in\N$ and let $\G_{qp}$ be the associated Carnot group.
 Then
\begin{enumerate}[nosep]
 \item \label{unno} We have
 \begin{equation}\label{etichetta}
\begin{aligned}
  \Cut(\G_{qp})=\big\{(x,0,t)\in\G_{qp} :  t\notin \Im x \; \text{ and }\;
  |P_{\Im x}^\perp t|\geq \pi |x^\dag t|^2\big\}.
  \end{aligned}
\end{equation}
\item \label{duee}    If $p\geq 2$, then all cut points are conjugate points.  \item \label{terzi}  A cut point $(x,0,t) $ is reached by a unique unit-speed length-minimizing curve if and only if   equality $  |P_{\Im x}^\perp t|= \pi |x^\dag t|^2 $ holds.
\item \label{tres} Finally, the distance from the origin of a cut point $(x,0,t)$ has the form
\begin{equation}\label{pece}
 d(x,0,t)  =\sqrt{|x|^2+4\pi|P_{\Im x}^\perp t|}.
\end{equation}
\end{enumerate}
\end{theorem}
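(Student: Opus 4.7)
The plan is to work directly with the parametrization of cut points from Theorem~\ref{enuncia} and the explicit formula for $(x,0,t)$ displayed just before the statement, carrying out the analysis in the orthogonal decomposition $\R^q=\Span(\t)\oplus\t^\perp$. For a cut point $(x,0,t)=\gamma(2\pi/|\t|,\xi,\eta,\t)$, introduce $v=\xi^T\t/|\t|\in\R^p$, so that $P_\t\xi=\t v^T/|\t|$ and $|P_\t\xi|^2=|v|^2$. The central geometric observation is that the columns of $x=(2\pi/|\t|)P_\t^\perp\xi$ all lie in $\t^\perp$, hence $\Im x\subseteq\t^\perp$ and $\t\in(\Im x)^\perp$. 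Substituting in the formula for $t$ rewrites it as
\begin{equation*}
 t=\frac{\pi(|v|^2+|\eta|^2)}{|\t|^2}\frac{\t}{|\t|}-\frac{xv}{|\t|},
\end{equation*}
which is automatically the orthogonal splitting $t=P_{\Im x}^\perp t+P_{\Im x}t$ (first summand along $\t\perp\Im x$, second along $xv\in\Im x$). In particular $t\notin\Im x$ by the assumption $|\eta|+|P_\t\xi|>0$, and $|P_{\Im x}^\perp t|=\pi(|v|^2+|\eta|^2)/|\t|^2$.

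For the forward inclusion in~\eqref{etichetta} it remains to check the inequality: since $P_{\Im x}^\perp t$ is killed by $x^\dag$, one has $x^\dag t=-x^\dag xv/|\t|=-P_{(\ker x)^\perp}v/|\t|$, so
$\pi|x^\dag t|^2=\pi|P_{(\ker x)^\perp}v|^2/|\t|^2\leq\pi(|v|^2+|\eta|^2)/|\t|^2=|P_{\Im x}^\perp t|$, with equality iff $\eta=0$ and $v\in(\ker x)^\perp$. For the reverse inclusion, given $(x,0,t)$ in the right-hand side of~\eqref{etichetta}, I would construct a covector explicitly: choose $\t$ parallel to $P_{\Im x}^\perp t$ with $|\t|$ free for now, solve $xv=-|\t|P_{\Im x}t$ by setting $v=v_0+w$ with $v_0=-|\t|x^\dag t\in(\ker x)^\perp$ and $w\in\ker x$, and pick $\eta\in\R^p$ on the sphere $|w|^2+|\eta|^2=|\t|^2(|P_{\Im x}^\perp t|/\pi-|x^\dag t|^2)$, whose nonemptiness is exactly the inequality in~\eqref{etichetta}. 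Setting $\xi=\t v^T/|\t|+|\t|x/(2\pi)$ then reproduces $(x,0,t)$ by direct substitution, proving item~(\ref{unno}).

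For items~(\ref{terzi}) and~(\ref{tres}), compute the squared length of the constructed minimizer using $|\xi|^2=|v|^2+|\t|^2|x|^2/(4\pi^2)$ together with the constraint $(|v|^2+|\eta|^2)/|\t|^2=|P_{\Im x}^\perp t|/\pi$:
\begin{equation*}
 L^2=(2\pi/|\t|)^2|(\xi,\eta)|^2=\frac{4\pi^2(|v|^2+|\eta|^2)}{|\t|^2}+|x|^2=|x|^2+4\pi|P_{\Im x}^\perp t|,
\end{equation*}
independently of the residual freedom in the parameters, which establishes~\eqref{pece}. Since $L=d(x,0,t)$, unit-speed normalization $|(\xi,\eta)|=1$ forces $|\t|=2\pi/d$; with $|\t|$ fixed, the only remaining freedom is $(w,\eta)\in\ker x\times\R^p$ on the sphere of squared radius $C:=|\t|^2(|P_{\Im x}^\perp t|/\pi-|x^\dag t|^2)\geq 0$. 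The minimizer is therefore unique precisely when $C=0$, i.e.\ when equality holds in~\eqref{etichetta}, giving~(\ref{terzi}).

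The main obstacle is item~(\ref{duee}). When strict inequality holds and $p\geq 2$, the above sphere in $\ker x\times\R^p$ has dimension $\dim(\ker x)+p-1\geq 1$, yielding a smooth positive-dimensional family of covectors mapping to the same $(x,0,t)$ at time $2\pi/|\t|$; differentiating the family at fixed endpoint produces a nonzero element in the kernel of the differential of the exponential map, so $(x,0,t)$ is a conjugate point. The delicate case is equality with $p\geq 2$, where the family collapses to a single covector and the previous argument fails. I would handle it either by a closedness argument (equality cut points arise as limits of strict-inequality cut points whose initial covectors lie on the conjugate set in $T_0^*\G_{qp}$, which is closed) or by a direct Jacobi-field computation along the unique minimizer, exhibiting a nontrivial Jacobi field vanishing at both endpoints through an infinitesimal variation in the parameter $\eta$ admissible under the degenerate constraint. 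This analysis in the limiting configuration is the main technical point requiring care.
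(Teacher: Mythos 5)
Your proof of items \ref{unno}, \ref{terzi}, and \ref{tres} is correct and follows essentially the same route as the paper's Steps~1--3: forward inclusion by eliminating $P_\t^\perp\xi$ and projecting $t$ onto $\Im x$ and its orthogonal, reverse inclusion by reconstructing $\t,\xi,\eta$ with the freedom parametrized over $\ker x\times\R^p$, and the distance from the Pythagorean split of $|(\xi,\eta)|^2$. The only difference is notational: you keep $|\t|$ free and use $v=\xi^T\t/|\t|$, while the paper normalizes $|\t|=1$ and sets $\beta=\xi^T\t$; the content is identical, including your observation that $x^\dag t=-P_{(\ker x)^\perp}v/|\t|$, which mirrors~\eqref{quadro}.

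For item \ref{duee}, your positive-dimensional-family argument correctly handles cut points with $|P_{\Im x}^\perp t|>\pi|x^\dag t|^2$, and it is a geometric repackaging of what the paper does in Proposition~\ref{massetto} (the paper exhibits a specific kernel direction $D_v$ with $v\perp\bar\eta$, exploiting the radial dependence of~\eqref{ovest} on $\eta$). However, you leave the equality case as a sketch with two candidate strategies, and this is a genuine gap in your writeup. The limit argument you propose is the right one and is exactly the paper's: for a cut point reached by the unique covector $(\bar\xi,0,\bar\t)$, consider $\e\mapsto\gamma\big(\tfrac{2\pi}{|\bar\t|},\bar\xi,\e\eta_0,\bar\t\big)$ for fixed $\eta_0\neq 0$; for each $\e>0$ these covectors have $\eta\neq 0$, so the differential of the fixed-time map $(\xi,\eta,\t)\mapsto\gamma\big(\tfrac{2\pi}{|\bar\t|},\xi,\eta,\t\big)$ is singular at $(\bar\xi,\e\eta_0,\bar\t)$ by the strict-inequality (or directly the $\eta\neq 0$) argument, and since $\gamma$ is smooth the determinant of that differential is continuous and therefore still vanishes at $\e=0$, giving conjugacy at the equality point. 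Your alternative suggestion, a direct Jacobi-field computation in the degenerate configuration, is unnecessary once this continuity argument is spelled out. You should also note that your logical ordering differs from the paper's: the paper proves item~\ref{duee} (Proposition~\ref{massetto}) \emph{before} and independently of the cut-locus description, because it feeds into the upper bound in Theorem~\ref{enuncia}, whereas you derive item~\ref{duee} from the sphere structure obtained in the item~\ref{unno} analysis; since Theorem~\ref{enuncia} is taken as given here, your ordering is sound, but be aware of the dependency in the original.
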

 If $p=1$, item \ref{duee} is drastically different. Note that for $p=1$ we have $x^\dag t=\frac{\langle x,t\rangle}{|x|^2}$.

\begin{theorem}\label{loggia}
If $p=1$ and $q\geq 2$, a cut point $(x,0,t)$ is conjugate if and only if
\begin{equation}\label{watts}
|x| \big( |P_{\Im x}^\perp t|-\pi |x^\dag t|^2\big)=0.
\end{equation}
\end{theorem}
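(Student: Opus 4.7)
The plan is to analyze the kernel of the differential of the exponential map at each cut covector reaching the given point $(x,0,t)$ of $\G_{q1}$, distinguishing the three regimes corresponding to the two factors in \eqref{watts}. Specialized to $p=1$, the cut-time formulas displayed just before Theorem~\ref{cutolo} can be inverted to describe the cut covectors $(\xi,\eta,\t)$. When $x\neq 0$, the covector is determined (up to the reparametrization scaling $|\t|>0$) by $\hat\t:=\t/|\t|=P_{\Im x}^\perp t/|P_{\Im x}^\perp t|$, $\alpha:=\xi\cdot\hat\t=-|\t|\,x^\dag t$, $P_\t^\perp\xi=\tfrac{|\t|}{2\pi}x$, and $\eta^2=\tfrac{|\t|^2}{\pi}(|P_{\Im x}^\perp t|-\pi|x^\dag t|^2)$; this yields two covectors $(\xi,\pm\eta_0,\t)$ in the strict inequality case and a unique one ($\eta=0$) in the equality case. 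When $x=0$, only $\hat\t=t/|t|$ is forced and $(\alpha,\eta)$ varies on a circle of prescribed radius, producing a one-parameter family of minimizers.

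The two easy regimes then follow by exhibiting a specific element of $\ker d\Psi_{s_0}$, where $\Psi_{s_0}(\xi,\eta,\t):=\gamma(s_0,\xi,\eta,\t)$ at the fixed time $s_0=2\pi/|\t|$. From the Hamilton equations the horizontal trajectory has the closed form
\[
x(s)=\tfrac{1}{|\t|}[\alpha\sin(|\t|s)-\eta(1-\cos(|\t|s))]\hat\t+s\,P_\t^\perp\xi,\quad y(s)=\tfrac{1}{|\t|}[\eta\sin(|\t|s)+\alpha(1-\cos(|\t|s))],
\]
and a direct calculation at $|\t|s_0=2\pi$ gives $\delta y(s_0)=2\pi\eta\,\delta|\t|/|\t|^2$. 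In the equality case ($\eta=0$ with $x\neq 0$), $\delta y(s_0)$ vanishes identically in all variations, so the image of $d\Psi_{s_0}$ lies in the $(2q)$-dimensional hyperplane $\{y=0\}\subset\R^{2q+1}$ and $d\Psi_{s_0}$ drops rank, giving conjugacy. In the $x=0$ case, the tangent to the circle of minimizers, namely $(\delta\xi,\delta\eta,\delta\t)=(-\eta\hat\t,\alpha,0)$, is verified directly to satisfy $\delta x(s_0)=\delta y(s_0)=\delta t(s_0)=0$ (using $P_\t^\perp\xi=0$ and the cut-time formula $t(s_0)=\tfrac{\pi(\alpha^2+\eta^2)}{|\t|^2}\hat\t$), providing a nonzero kernel element, hence conjugacy.

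The substantive case is $x\neq 0$ with strict inequality in \eqref{watts}, equivalently $\eta\neq 0$ and $P_\t^\perp\xi\neq 0$; here we must show $d\Psi_{s_0}$ is invertible. The condition $\delta y(s_0)=0$ combined with $\eta\neq 0$ forces $\delta|\t|=0$. With $\delta|\t|=0$, the equation $\delta x(s_0)=s_0\,\delta(P_\t^\perp\xi)=0$ splits, by projecting on $\hat\t$ and on $\hat\t^\perp$, into $\delta\t\perp P_\t^\perp\xi$ (using $P_\t^\perp\xi\neq 0$) and $P_\t^\perp\delta\xi=\alpha\delta\t/|\t|$, leaving $q$ free parameters: $c:=\delta\xi\cdot\hat\t$, $d:=\delta\eta$, and $\delta\t\in\hat\t^\perp\cap(P_\t^\perp\xi)^\perp$. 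Finally, imposing $\delta t(s_0)=0$ via the cut-time formula $t(s_0)=\tfrac{\pi(\alpha^2+\eta^2)}{|\t|^2}\hat\t-\tfrac{2\pi\alpha}{|\t|^2}P_\t^\perp\xi$, the in-plane piece ($\delta\t=0$) gives
\[
\delta t(s_0)=\tfrac{2\pi}{|\t|^2}\bigl[(c\alpha+d\eta)\hat\t-c\,P_\t^\perp\xi\bigr],
\]
and linear independence of $\hat\t$ and $P_\t^\perp\xi$ together with $\eta\neq 0$ forces $c=d=0$; for the rotational piece ($c=d=0$, $\delta\xi=(\alpha/|\t|)\delta\t$), one finds $\delta t(s_0)=\tfrac{\pi(\alpha^2+\eta^2)}{|\t|^3}\delta\t$, nonzero because $\eta\neq 0$, so $\delta\t=0$ as well. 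The main obstacle is the algebraic book-keeping of this third step, but once assembled the conclusion is that $\eta\neq 0$ and $|x|\neq 0$ work in tandem to give $d\Psi_{s_0}$ full rank, confirming non-conjugacy exactly outside the zero set of \eqref{watts}.
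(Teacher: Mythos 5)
Your proof is correct, and it takes a genuinely different route from the paper's. The paper obtains Theorem~\ref{loggia} as a one-paragraph corollary of the general covector-level characterization of conjugacy (Propositions~\ref{massetto}, \ref{piuno} and Theorem~\ref{taglia}: $\bar s=2\pi/|\t|$ is conjugate iff $\eta\,P_\t^\perp\xi=0$), combined with the dictionary from cut covectors to cut points from Remark~\ref{spiego}; the technical weight is carried by Theorem~\ref{taglia}, whose proof in Section~\ref{cinese} is a long determinant computation using an auxiliary orthogonal frame $Z,V_1,\dots,V_{q-1}$, the Liouville-form lemma (Lemma~\ref{liouville}), and explicit evaluation of $d_\t\gamma$ in a chosen basis. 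You instead work directly at cut covectors: after inverting the formulas~\eqref{ovest} (matching Remark~\ref{spiego}) to express $(\xi,\eta,\t)$ in terms of $(x,t)$ and the free scaling $|\t|$, you handle the degenerate regimes by exhibiting a specific kernel element (the rotation vector, for $x=0$) or by a cokernel observation ($\delta y(s_0)\equiv 0$ when $\eta=0$, a cleaner reformulation of the paper's computation that $Zy=0$), and you handle the nondegenerate regime by sequential elimination $\delta y\!=\!0\Rightarrow\delta|\t|=0\Rightarrow\delta x\!=\!0\Rightarrow(\delta\t\perp P_\t^\perp\xi,\ P_\t^\perp\delta\xi=\alpha\delta\t/|\t|)\Rightarrow\delta t\!=\!0\Rightarrow c=d=0,\ \delta\t=0$. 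The orthogonal decomposition of $\delta t(s_0)$ along $\hat\t$, $\Span\{P_\t^\perp\xi\}$, and $\hat\t^\perp\cap(P_\t^\perp\xi)^\perp$ replaces the determinant bookkeeping and makes the role of the two nondegeneracy hypotheses ($\eta\neq 0$ kills $\delta|\t|$ and then $d$; $P_\t^\perp\xi\neq 0$ kills $c$; $\alpha^2+\eta^2>0$ kills the remaining $\delta\t$) completely transparent. The trade-off is that your argument is tied to cut covectors, whereas the paper's Theorem~\ref{taglia} states the conjugacy criterion for arbitrary covectors and so has independent interest (e.g.\ for computing the first conjugate time as a function on $T^*_0\G_{q1}$). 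One small suggestion: when treating the third step, rather than splitting into an ``in-plane'' and a ``rotational'' piece you should simply invoke that the three contributions to $\delta t(s_0)$ lie in mutually orthogonal subspaces, so each must vanish separately; as written, the case split slightly obscures that you have a single linear system.
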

In the statement above,
for the notion of conjugate point, see Definition~\ref{pcon}.   The conjugate   point $(x,0,t)$ is necessarily a \emph{first conjugate point}, see   Remark~\ref{capperi}. \color{black}
   For the Moore-Penrose inverse   $x^\dag\in \R^{p\times q}$  of $x$,  see e.g.~\cite{Meyer-matrix-analysis}. Precisely, given $t\in\R^q$,   $x^\dag t$    is   uniquely defined by $x x^\dag t=P_{\Im x} t$ and
$x^\dag t\perp \ker x$. If $\ker x=\{0\}$, then  $x^\dag t$ is the unique solution~$y\in\R^p$ of the system $xy=P_{\Im x}t$. Otherwise, $x^\dag t$ is the smallest-norm element of the set of solutions  $\{y= x^\dag t+\eta:\eta\in\ker x\}$ of  $x y=P_{\Im x}t$. In other words,
$|x^\dag t |=|x^\dag P_{\Im x}t|=\min\{|y|:y\in \R^p\text{ and } xy =P_{\Im x} t\}$.
 Note that we do not include  $(0,0,0)$  in our definition of  cut-locus.
 Observe that~\eqref{etichetta} implies that
$\Im x\neq \R^q$ for any point $(x,0,t)\in\Cut(\G_{qp})$.
  Furthermore, since given $x=0\in\R^{q\times p}$, we have  $0^\dag =0\in\R^{p\times q}$, all points of the form $(0,0,t)$   with $t\neq 0$   belong to the cut locus as expected.   Finally,    for all $r>0$ the set in~\eqref{etichetta} is dilation invariant with respect to the standard Carnot homogeneous dilation $(x,y,t)\mapsto(rx, ry, r^2 t) $.

\begin{remark}  Some remarks  on Theorems~\ref{enuncia} and~\ref{cutolo}
are now in order.
 \begin{enumerate}[nosep]
%

\item In Theorem~\ref{cutolo}, we describe precisely all cut points  reached by a unique length-minimizer.  Such kind of minimizers are absent in  $p$-dimensional Heisenberg groups $\G_{1p}$ (case  $(q=1)$), and in the free step-2, rank-three model (not included in our class~$\G_{qp}$). They appear, but they remained unnoticed  in~\cite{BarilariBoscainGauthier12}. Points of such kind appear classically on Riemannian equatorial geodesics in oblate revolution ellipsoids (See~\cite{Klingenberg}).   In Remark~\ref{insiemedenso} we prove that points in $\Cut(\G_{qp})$ reached by more than one unit-speed length-minimizer are dense in $\Cut(\G_{qp})$. See~\cite{Warner65},  for the Riemannian analogous.
\color{black}

\item \label{tre}  The set  $\Cut(\G_{qp})\cup\{(0,0,0)\}$ is not closed for any $q\geq 2$ and $p\in\N$.  Indeed, consider a family $(x,0, \e t)$ with $x\neq 0$, $t\neq 0$ and $t\perp\Im x$ (this forces $q\geq 2$). We have $(x,0, \e t)\in \Cut(\G_{qp})$
for all $\e>0$, but $(x,0,0)$ is not a cut point for any $x\in\R^{q\times p}  $.
In  Proposition~\ref{sedia}, we prove  that these points    $(x,0,0)\neq 0$  belonging to  $\overline{\Cut(\G_{qp})}\setminus
\Cut(\G_{qp})$ are always abnormal points.
Recall that, in absence of abnormal minimizers, Rifford and Tr\'elat proved in~\cite[Lemma~2.11]{RiffordTrelat09} that   in a sub-Riemannian Carnot group $\G$ the set  $\Cut(\G)\cup \{0\}$ is closed.

\item At abnormal points appearing   in item~\ref{tre},
the function $t_\cut:T^*_0\G_{qp}\to \left]0,+\infty\right]$,
defined as $(\xi,\eta,\t)\mapsto t_\cut(\xi,\eta,\t)$ has a discountinuous behaviour (upper semicontinuous, unbounded), as a function of the dual variables $(\xi, \eta,\t)$. See Example~\ref{discontinua}. The function $(s,\xi,\eta,\t)\mapsto \gamma(s,\xi,\eta,\t)$ is instead smooth in all its arguments, by standard ODE theory.

\item Observe that $\Cut(\G_{qp})$ can be a quite large set. Namely, if $p=1$, it turns out that $\Cut(\G_{q1})$ contains the    set   $\{(x,0,t)\in\G_{q1}:
|P_{\Im x}^\perp t| \gneqq \pi|x^{\dag} t|^2$ and $\operatorname{rank}x $ is maximal$\}$, which is
an open\footnote{By \cite{Stewart}, the map $x\mapsto x^\dag$ is continuous on the open set $\{x\in\R^{q\times p}:\operatorname{rank}(x) $ is maximal$ \}$.}
dilation invariant subset of the codimension-one hyperplane   of equation  $y=0$.     Note that in free step-2 Carnot groups $\F_k$ of rank $k=2$ and $k=3$,   the cut-locus is a smooth  manifold   of dimension $\operatorname{dim}(\Cut(\F_k))=\dim(\F_k)-2$ (for the case $k=3$, see \cite[Remark~4.3]{MontanariMorbidelli17}).   The conjectured  dimension of $\Cut(\F_k)$ is again
$\dim(\F_k)-2$ in  rank-$k$, step-2  groups $\F_k$ with $k\geq 3$.    See \cite{RizziSerres16}.
 Observe finally that in the limiting case $p=1$  our set~\eqref{watts} agrees with the one found by~\cite[Section~10]{LiZhang} with completely different methods.

\item Finally, note that in the model $\G_{q1}$, all points $(x,0,t)\in\Cut$ such that~\eqref{watts} is violated give examples of extremals whose cut time is strictly less than the first conjugate time. See~\cite{BarilariBoscainGauthier12} for previous different examples.

\end{enumerate}
\end{remark}

Since the cut locus appearing in~\eqref{etichetta} is not easy to  visualize for general values of $q,p$, in Subsection~\ref{zoppo} we  discuss in some details  the group $\G_{q1}$. In that case, the cut locus turns out to be defined as a sublevel set of an explicit scalar polynomial function. Its regularity properties are analyzed in Proposition~\ref{ciliegia}.

 Let us describe now the structure of the paper. In Section~\ref{preliminari}, after providing general notation and known facts, we write the length-extremals of our sub-Riemannian problem and we characterize abnormal ones.
 To prove Theorem~\ref{enuncia},  starting from the candidate cut time $\frac{2\pi}{|\t|}$ appearing in~\eqref{azione}, we show in Section~\ref{dasopra} the upper estimate $t_\cut \leq \frac{2\pi}{|\t|}$ for extremals which are not Euclidean lines. This is achieved by
 the analysis of conjugate points (Propositions~\ref{massetto} and~\ref{piuno}). We also characterize points reached by a unique minimizer (see Proposition~\ref{masso}). In Section~\ref{dalbasso},
 we prove the lower bound $t_\cut\geq \frac{2\pi}{|\t|}$ by using  a   geometric-control argument    in part   inspired to
 the paper~\cite{BarilariBoscainGauthier12}. Finally, in Section~\ref{cinque}, we conclude the  proof of Theorem~\ref{cutolo}.
In Subsection~\ref{zoppo}, we also explicitly  describe $\Cut(\G_{q1})$ and we analyze its regularity.
  In Section~\ref{cinese}, we prove Theorem~\ref{loggia}.

\section{Reiter-Heisenberg groups and  their length-extremals}\label{preliminari}
In this section we briefly recall the notion of sub-Riemannian length and distance in Reiter-Heisenberg groups. Then we write the explicit form of extremals and,   among them, we characterize the abnormal ones.

\subsection{General facts}
Let $(\G_{qp}, \cdot)$ be the Reiter-Heisenberg group defined in~\eqref{reiter}. A horizontal curve is a  Lipschitz-continuous solution $\gamma=(x,y,t):[0, T]\to V_1\times V_2$     of the   ODE
\begin{equation}\label{senza}
 (\dot x,\dot y)=(u, v) \quad \text{ and } \dot t = Q((x,y), (u, v)),\quad \text{a.e. on $[0,T]$, }
\end{equation}
where $(u,v): [0,T]\to V_1  $ is   an $L^{\infty}$ control.   The horizontal speed of $\gamma$ is $|\dot\gamma |_{\text{hor}}:=\sqrt{|u|^2+|v|^2}:=\sqrt{(\operatorname{trace}u^T u)^2 +|v|^2}$. The length of $\gamma $ is $\int_0^T  |\dot\gamma |_{\text{hor}}(s) ds
$. Since we have  H\"ormander's rank condition $\Span \{Q((x,y), (\xi, \eta)):  (x,y), (\xi, \eta)\in V_1\}=V_2$, it turns out that any pairs of points can be connected by a horizontal curve   (this follows from Chow--Rashevskii theorem).   Minimizing such length we obtain  the  well known sub-Riemannian distance.

Let us introduce some notation in $\G_{qp}$. We sometimes identify $\R^p$ with $\R^{p\times 1}$ and the same for $\R^q$.
As we declared, we use the Hilbert-Schmidt  inner product in $\R^{q\times p}$, i.e.~ $\langle x, \xi\rangle:=\operatorname{trace}( \xi^T x)$.  Then, in $\R^{q\times p}\times\R^p$ we define $\langle(x,y), (\xi, \eta)\rangle :=\langle x,\xi\rangle +\langle y,\eta\rangle$, so that $e_\a e_k^T$ as $\a\in\{1,\dots, q\}$ and $k\in\{1,\dots,p\}$ is an orthonormal basis of~$\R^{q\times p}$,    where $e_\a$ and   $e_j$, with  $\a\in\{1,\dots, q\}$ and  $j\in\{1,\dots p\}$   denote the canonical basis of $\R^q$ and $\R^p$.
Introduce,
 for $\t\in\R^q$, the linear  map    $ A_\t:V_1\to V_1$
\begin{equation*}
 A_\t
(\xi,\eta)
=(  \t \eta^T,
 -\xi^T\t).
\end{equation*}
Since the map is also linear in $\t$, we have   $A_\t =\sum_{\a=1}^q\t_\a A_{e_\a}$, with  $A_{e_\a}( \xi,  \eta) =
( e_\a \eta^T , -\xi^T e_\a)$
 for all $(\xi,\eta)\in V_1$.
Thus, we have
\begin{equation*}\begin{aligned}
 \langle
( x,  y) ,  A_{e_\a} (  \xi,   \eta)
\rangle & =
 \langle (x,y),
( e_\alpha \eta^T , -\xi^T e_\a)\rangle
=\langle x, e_\alpha \eta^T\rangle- \langle y, \xi^T e_\a\rangle =\langle x\eta-\xi y ,e_\a\rangle.
\end{aligned}
\end{equation*}
Each map $A_{e_\a}$ is skew-symmetric and we have $  Q((x,y),(\xi,\eta))=\frac 12\sum_{\a=1}^q \langle
( x , y) , A_{e_\a} ( \xi,  \eta)
\rangle e_\a$.

\subsection{Extremal curves}\label{curvette}
We are interested in writing length-minimizing curves.
In order to write   them,    we follow \cite[Section 13.1]{AgrachevBarilariBoscain}.
Note that in step-2 Carnot groups it is known that all extremals are normal (see  \cite[Corollary
12.14]{AgrachevBarilariBoscain} ).    We first write a frame  of left-invariant horizontal orthonormal vector fields. For
$\a\in\{1,\dots,q\}$ and $k,j\in\{1,\dots, p\}$ we have
\begin{equation}\label{campetti} \begin{aligned}
 X_{\a k}(x,y,t)& =\frac{d}{ds}\Big|_{s=0}(x,y,t)\cdot(s e_{\a} e_k^T, 0, 0)=\Big(e_{\a  }e_k^T, 0, -\frac 12 y_k e_\a\Big)
 \\
 Y_j(x,y,t)& =\frac{d}{ds}\Big|_{s=0}(x,y,t)\cdot (0, se_j, 0)=\Big(0, e_j, \frac 12 x e_j)=\Big(0, e_j, \frac 12 x_j),
\end{aligned}
\end{equation}
where we wrote $x=[x_1,\dots, x_p]$ with $x_j\in \R^q$ for all $j\in\{1,\dots, p\}$. Introducing the functions  $u_{\a k}(x,y,t,\xi,\eta,\t):= \langle (\xi,\eta, \t),  X_{\a k}(x,y,t)\rangle$ and $v_j
(x,y,t,\xi,\eta,\t)=\langle (\xi,\eta, \t),  Y_j(x,y,t)\rangle$,  extremals are furnished by the Hamiltonian
\begin{equation*}
\begin{aligned}
 H((x,y,t), (\xi, \eta, \tau))& =
 \frac 12\sum_{\a,k}
 u_{\a k}(x,y,t,\xi,\eta,\t)^2+ \frac 12\sum_{j}v_j(x,y,t,\xi,\eta,\t)^2.
 \end{aligned}
\end{equation*}
Namely, to obtain all minimizers from $(0,0,0)$, one integrates the Hamiltonian system $(\dot  x, \dot  y,
\dot t)=\nabla_{(\xi, \eta, \t)}H$ and $(\dot\xi,\dot \eta, \dot \t)=-\nabla_{(x,y,t)}H$ with
initial data $(x(0), y(0), t(0))=(0,0,0)$ and $(\xi(0), \eta(0), \t(0))=(\xi, \eta, \t)\in T^*_0\G_{qp}\simeq V_1\times V_2$. \footnote{  Since we are taking global coordinates $((x,y,t), (\xi,\eta,\tau))\in (V_1\times V_2)\times (V_1\times V_2)$ on $T^*(V_1\times V_2)$, we identify covectors in $T^*_{(0,0,0)}(V_1\times V_2)$ with $(\xi,\eta,\tau)\in V_1\times V_2$.} It
turns out that extremals from the origin are horizontal curves
\begin{equation}\label{laga}
s\mapsto \gamma(s)=\gamma(s, \xi, \eta, \t)\in V_1\times V_2                                                                                                                           \end{equation}  parametrized by $(\xi, \eta, \t)\in V_1\times V_2$.
Furthermore, again, by \cite[Section 13.1]{AgrachevBarilariBoscain},
given $(\xi,\eta,\t)$, the  extremal curve
$\gamma(\cdot, \xi, \eta, \t)$ is the solution of~\eqref{senza}, with
\begin{equation}\label{tremo}
( u(s),v(s))
= e^{-s A_\t}(\xi,  \eta)\in V_1\times V_2.
\end{equation}

Next we give the form of  extremal curves in terms of the three functions
\begin{equation*}
T(\phi)=\frac{\sin\phi}{\phi},\quad  U(\phi) =\frac{\phi-\sin\phi\cos\phi}{4\phi^2}\quad\text{and}\quad V(\phi)=\frac{\sin\phi-\phi\cos \phi} {2\phi^2}
\end{equation*}
defined for $\phi>0$.
\begin{proposition}\label{monte} For all $(\xi, \eta,\t)\in T^*(V_1\times V_2)\simeq V_1\times V_2$ with $\t\neq 0$, the curve $\gamma(\cdot,\xi, \eta, \t)=(x(\cdot,\xi, \eta, \t),y(\cdot,\xi, \eta, \t),t(\cdot,\xi, \eta, \t))$ in~\eqref{laga} has the form
\begin{equation}  \begin{aligned}\label{pari}
 x(s,\xi, \eta, \t) &= s T\big(\tfrac{|\t|s}{2}\big)
 \Big\{
 P_\t\xi\,
\cos\big(\tfrac{|\t|s}{2}\big)
-\frac{ \t\eta^T}{|\t|} \sin\big(\tfrac{|\t|s}{2}\big)
\Big\}
  + s
P_{\tau}^\perp \xi
\\
y (s,\xi, \eta, \t) &=
s T\big(\tfrac{|\t|s}{2} \big)\Big\{\eta \cos \big(\tfrac{|\t|s}{2} \big)+
\frac{\xi^T\t}{|\t|}
\sin\big(\tfrac{|\t|s}{2} \big)\Big\}
\\ t
(s,\xi, \eta, \t) &= s^2 U\big(\tfrac{|\t|s}{2} \big)
\big\{|P_\t\xi|^2 +|\eta^2| \big\}\frac{\t}{|\t|}
\\& +s^2 V\big(\tfrac{|\t|s}{2} \big)
P_\t^\perp\xi  \Big\{-  \eta  \sin\big(\tfrac{|\t|s}{2}\big)
+ \frac{\xi^T\t}{|\t|}   \;
\cos \big(\tfrac{|\t|s}{2} \big)
\Big\}.
\end{aligned}\end{equation}
\end{proposition}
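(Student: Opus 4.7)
The plan is to integrate explicitly the ODE $(\dot x(s), \dot y(s)) = e^{-sA_\tau}(\xi,\eta)$ supplied by~\eqref{tremo}, and then recover $t(s)$ via $\dot t = Q((x,y),(\dot x,\dot y)) = \tfrac12(x\dot y - \dot x\, y) \in \R^q$, starting from $t(0)=0$.

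To compute the exponential, observe that $A_\tau(P_\tau^\perp\xi, 0) = (0,0)$, so $W_0 := \{(P_\tau^\perp\xi, 0)\}$ lies in $\ker A_\tau$, while its orthogonal complement $W_1 := \{(P_\tau\xi, \eta)\}$ is $A_\tau$-invariant. A short computation using $\tau^T\tau = |\tau|^2$ shows that $A_\tau^2 = -|\tau|^2 \operatorname{Id}$ on $W_1$, hence
\begin{equation*}
e^{-sA_\tau}(\xi,\eta) = \cos(s|\tau|)\,(P_\tau\xi, \eta) - \frac{\sin(s|\tau|)}{|\tau|}\bigl(\tau\eta^T,\,-(P_\tau\xi)^T\tau\bigr) + (P_\tau^\perp\xi, 0),
\end{equation*}
where the simplification $(P_\tau\xi)^T\tau = \xi^T\tau$ follows from $(P_\tau^\perp\xi)^T\tau = 0$. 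Integrating from $0$ to $s$ componentwise and applying the half-angle identities $\sin(s|\tau|) = 2\sin(s|\tau|/2)\cos(s|\tau|/2)$ and $1 - \cos(s|\tau|) = 2\sin^2(s|\tau|/2)$, the common factors collapse into $sT(s|\tau|/2)\cos(s|\tau|/2)$ and $sT(s|\tau|/2)\sin(s|\tau|/2)$, yielding the stated formulas for $x(s)$ and $y(s)$. The isolated $sP_\tau^\perp\xi$ summand in $x$ is the contribution of the constant term $P_\tau^\perp\xi$ in $u(s)$.

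For $t(s)$ I would substitute the expressions just obtained into $\dot t = \tfrac12(xv - uy)$ and expand. Writing $\xi = P_\tau\xi + P_\tau^\perp\xi$, the key observations are that the $\tau\eta^T(\xi^T\tau)$-cross-term cancels identically between $xv$ and $uy$, and that the identity $\sin^2\phi - (1-\cos\phi)\cos\phi = 1 - \cos\phi$ collapses the purely oscillating contributions from the $W_1$-parts of $x,y,u,v$ into a single expression
\begin{equation*}
\frac{1 - \cos(s|\tau|)}{2|\tau|^2}\bigl(|P_\tau\xi|^2 + |\eta|^2\bigr)\tau.
\end{equation*}
Integrating this factor in $s$ and using the half-angle identities once more converts it into $s^2 U(s|\tau|/2)/|\tau|$, which matches the first line of the $t$-formula. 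The mixed contributions pairing $sP_\tau^\perp\xi$ (from $x$) with $v$ and $P_\tau^\perp\xi$ (from $u$) with $y$ produce terms proportional to $P_\tau^\perp\xi\cdot\eta$ and $P_\tau^\perp\xi\cdot\xi^T\tau$, with weights $\int_0^s[r\cos(r|\tau|) - \sin(r|\tau|)/|\tau|]\,dr$ and $\int_0^s[r\sin(r|\tau|)/|\tau| - (1-\cos(r|\tau|))/|\tau|^2]\,dr$ respectively; integration by parts on $\int_0^s r\sin(r|\tau|)\,dr$ and $\int_0^s r\cos(r|\tau|)\,dr$ followed by a half-angle simplification delivers exactly the coefficients $-s^2 V(s|\tau|/2)\sin(s|\tau|/2)$ and $s^2 V(s|\tau|/2)\cos(s|\tau|/2)$ appearing in~\eqref{pari}.

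The only nontrivial step is the bookkeeping in the $t$-computation: one must carefully verify the $\sin\cdot\cos$ cancellations between $xv$ and $uy$ and match the resulting antiderivatives to the specific definitions of $U$ and $V$. Everything else is a direct consequence of the two-dimensional diagonalization of $A_\tau$ on the invariant subspace $W_1$.
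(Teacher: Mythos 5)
Your argument is correct and follows essentially the same route as the paper. The only cosmetic difference is in how you compute $e^{-sA_\tau}$: you decompose $V_1$ as $\ker A_\tau \oplus (\ker A_\tau)^\perp$ and use $A_\tau^2 = -|\tau|^2\,\mathrm{Id}$ on the second summand, whereas the paper sums the power series after noting $A_\tau^3 = -|\tau|^2 A_\tau$ — both yield the same decomposition $(u,v)(s) = a\cos(|\tau|s) + b\sin(|\tau|s) + z$ with $a = (P_\tau\xi,\eta)$, $b = (-\tau\eta^T/|\tau|, \xi^T\tau/|\tau|)$, $z = (P_\tau^\perp\xi,0)$. For $t(s)$, your bookkeeping is exactly the paper's exploitation of the bilinearity and skew-symmetry of $Q$ (so $Q(a,a)=Q(b,b)=Q(z,z)=0$), reducing the integrand to the three scalar coefficients of $Q(a,b)$, $Q(a,z)$, $Q(b,z)$ followed by the half-angle rewrite into $U$ and $V$.
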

In formula~\eqref{pari},  recall that $P_\t:=\frac{\t\t^T }{|\t|^2}\in\R^{q\times q}$ and $P_\t^\perp =I_q-P_\t$. Note also that $|P_\t\xi|^2=\frac{|\xi^T\t|^2}{|\t|^2}$. Observe  the   known  property
\begin{equation}\label{benny}
 \gamma(\la s,\xi,\eta,\t)=\gamma(s, \la\xi,\la\eta,\la\t),\quad\text{for all $\la>0$, $(\xi,\eta,\t)\in V_1\times V_2$.}
\end{equation}

\begin{proof} Since
\begin{equation}\label{matera}
 A_\t^2
(      \xi,\eta)=(   -\t\t^T\xi , -|\t|^2\eta)
 \text{ and }
A_\t^3
(\xi, \eta)=  -|\t|^2(\t  \eta^T, -\xi^T\t) =-|\t|^2 A_\t(\xi, \eta),
\end{equation}
summing up the series we get
\begin{equation*}\begin{aligned}
(  u(s),  v(s))&= ( \xi ,\eta)
  -\frac{\sin(|\t|s)}{|\t|}
 (\t  \eta^T ,-\xi^T\t
)
 -\frac{1-\cos(|\t|s)}{|\t|^2}
(  \t\t^T\xi  , |\t|^2\eta).
\end{aligned}
\end{equation*}
Let now
\begin{equation*}
 a=a(\xi, \eta):=(P_\t\xi, \eta),\quad b=b(\xi, \eta):=
 \Big(  -\frac{\t \eta^T}{|\t|}
 , \frac{\xi^T\t}{|\t|} \Big),\quad z=z(\xi, \eta):=(P_\t^\perp\xi,0).
\end{equation*}
  Then, we can write
$
 (u(s), v(s)) = a\cos(|\t|s) + b\sin(|\t|s) +z
$.
Using the function $T(\phi):=\frac{\sin\phi}{\phi}$  and by trigonometry we get
\begin{equation*}\begin{aligned}
(  x(s), y(s))
&=\frac{\sin(|\t|s)}{|\t|}a+ \frac{1-\cos(|\t|s)}{|\t|}b + sz
\\
=& s T\Big(\frac{|\t|s}{2}\Big) \bigg\{
 a  \cos\Big(\frac{|\t|s}{2}\Big) +
b \sin\Big(\frac{|\t|s}{2}\Big)
                       \bigg\}
  +sz
\\=& s T\Big(\frac{|\t|s}{2}\Big) \bigg\{
                           (P_\t \xi ,   \eta  )
   \cos\Big(\frac{|\t|s}{2}\Big) +
\Big( -\frac{\t\eta^T}{|\t|}  , \frac{\xi^T \t}{|\t|}\Big)
 \sin\Big(\frac{|\t|s}{2}\Big)
                       \bigg\}
 +s (
P_\t^\perp\xi ,  0 ).
 \end{aligned}
\end{equation*}
To calculate $t(s)=\int_0^sQ((x , y ), (u  , v ))  $, integrating and by bilinearity we get
\begin{equation}\label{duse}
 \begin{aligned}
t(s)= &
\int_0^s Q\bigg( \frac{\sin(|\t|\s)}{|\t|}a + \, \frac{1-\cos(|\t|\s)}{|\t|}b+\s z,
 a\cos(|\t|\s)+b\sin(|\t|\s)+z\bigg)d\s
\\&=
\frac{|\t| s-\sin(|\t| s)} {|\t|^2}
Q(a,  b)
+
\frac{2(1-\cos(|\t| s))  - |\t| s\sin(|\t| s)}{|\t|^2}
Q(a, z)
\\& \qquad \qquad \qquad \qquad \qquad+
\frac{|\t| s(1+\cos(|\t| s)) - 2\sin(|\t| s)}{ |\t|^2}
Q(b,  z).
 \end{aligned}
\end{equation}
The form of $t$ has a structure analogous to \cite[eq.~(2.6)]{MontanariMorbidelli17}.
  By the definition   $Q((x,y), (\bar x, \bar y)):=\frac 12(x\bar y-\bar x y)$ and   $\frac{|\xi^T\t|^2}{|\t|^2}
=|P_\t\xi|^2 $,  an easy calculation gives
\begin{equation*}
\begin{aligned}
   Q(a,b)  &=\frac 12\Big\{\frac{|\xi^T\t|^2}{|\t|^2}+|\eta|^2\Big\}\frac{\t}{|\t|}
=\frac 12 (    |P_\t \xi |^2  +|\eta|^2 ) \frac{\t}{|\t|}
\\  Q(a,z) &  =
  -\frac 12P_\tau^\perp \xi\,\eta \in \R^{q\times 1}
\quad\text{and}\quad  Q(b,z)  =  -\frac 12
P_{\t}^\perp\xi  \frac{ \xi^T\t}{|\t|}.
\end{aligned}
\end{equation*} Therefore, starting from~\eqref{duse},   we obtain
\begin{equation}\label{ti}
\begin{aligned}
 t(s)&=\frac{|\t|s-\sin(|\t|s)}{2|\t|^2} ( |P_\t\xi|^2+|\eta|^2 )\frac{\t}{|\t|}
-\frac{2(1-\cos(|\t|s))-|\t|s\sin(|\t|s)}{2|\t|^2}
P_\t^\perp\xi\,\eta
 \\&\qquad
 -\frac{|\t|s(1+\cos(|\t|s))-2\sin(|\t|s)}{2|\t|^2} P_\t^\perp\xi  \frac{\xi^T\t}{|\t|}.
\end{aligned}
\end{equation}
Writing trigonometric functions in terms of  $ \frac{|\t|s}{2} $ instead of $|\t|s$, we get
\begin{equation}\label{est}
\begin{aligned}
t(s, \xi, \eta, \t) & = s^2U(  \tfrac{|\t|s}{2} ) (
|P_\t\xi|^2+|\eta|^2
 )\frac{\t}{|\t|}
\\&\qquad +s^2V   ( \tfrac{|\t| s}{2} )
\Big(-\sin ( \tfrac{|\t| s}{2} ) P_{\tau}^\perp \xi\,\eta  +\cos(   \tfrac{|\t| s}{2} )P_{\t}^\perp
\xi \frac{ \xi^T\t}{|\t|}  \Big),\end{aligned}
\end{equation}
as desired. The proof is finished.
\end{proof}

\begin{remark}\label{notflat}
 It is  easy to check that constant-speed  Euclidean lines of the form $(x(s), y(s), t(s))= (su, sv, 0)$  for some $(u,v)\in V_1\setminus\{(0,0)\}$ are always globally minimizing. Furthermore, an extremal of the form  $(u(s), v(s))=e^{-s A_\t}(\xi, \eta)  $  gives rise to  an Euclidean line of that form if and only if
 \begin{equation*}
  \left\{\begin{aligned}
 &\t=0\\&|\xi|^2+|\eta|^2>0 \end{aligned}\right.\qquad\text{or}\quad \left\{\begin{aligned}
& |\t| \,   |P_\t^\perp\xi |>0
\\& |\eta|^2+ |P_\t\xi |^2=0.
\end{aligned}
\right.
 \end{equation*}
In the first case, $\dot\gamma(s)=(\xi,\eta,0)$ and we have $\gamma(s)=(s\xi,s\eta,0)$. In the second case we have instead $\xi^T\t=0$ and~\eqref{pari} implies that $\gamma(s)=(sP_\t^\perp\xi, 0,0)=(s[P_\t^\perp\xi_1,
\dots, P_\t^\perp\xi_p],0,0)$. Note that in both cases we have $A_\t(\xi, \eta)=(\t\eta^T, -\xi^T\t)=0$,   getting     $e^{-s A_\t}(\xi, \eta)=(\xi,\eta) $ trivially.
\end{remark}

We conclude this section with a characterization of abnormal extremal curves.
Recall that, by definition,
given a~$L^2$  control $(u, v): (0, T)\to V_1$, the curve $\gamma_{(u, v)}:[0,T]\to \G_{qp}$ is abnormal if $d_{(u, v)}E:L^2((0, T)
\to \G\sim T_{E(u,v)}\G)$ is not onto. Here $E:L^2(0,T)\to \G_{qp}$ is the end point map, i.e. $E(u,v):=\gamma_{(u,v)}(T)$, where $\gamma_{(u,v)}$ is the curve corresponding to $(u,v)$ and with $\gamma(0)=(0,0,0)$.   We say instead that $\gamma_{(u,v)}$ is \emph{strictly normal} when it is not abnormal in any  subsegment of $[0,T]$.

\begin{proposition}\label{sedia}
  Let $q\geq 2$ and let     $(\xi, \eta, \t)
\in \G_{qp}\simeq T^*_{(0,0,0)}\G_{qp}$ and assume that   $|\xi|^2+|\eta|^2>0$. The curve $\gamma (\cdot,\xi, \eta, \t)$ is abnormal if and only if it has the form $\gamma(s)=(s\xi, 0, 0)$ with $\Im\xi\neq\R^q$.
\end{proposition}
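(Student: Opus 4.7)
The plan is to reduce the question to the standard PMP characterization of abnormal trajectories in step-$2$ Carnot groups: a horizontal curve with control $(u,v)\colon [0,T]\to V_1$ is abnormal if and only if there exists a covector $\mu\in V_2^*\setminus\{0\}$ such that $(u(s),v(s))$ lies in the kernel of the skew-symmetric form
$\omega_\mu((x,y),(\xi,\eta)):=\langle \mu,[(x,y),(\xi,\eta)]\rangle$
for almost every $s$. In a step-$2$ group the $V_2$-component of the adjoint covector is conserved along the abnormal Hamiltonian flow, so the maximality condition reduces to the pointwise condition above.

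First I would compute the bracket from the group law~\eqref{reiter}: it gives $[(x,y),(\xi,\eta)]=x\eta-\xi y\in V_2$, so
$\omega_\mu((x,y),(\xi,\eta))=\eta^T(x^T\mu)-y^T(\xi^T\mu)$.
Varying $\eta$ at $\xi=0$ kills $x^T\mu$, and varying $\xi$ at $\eta=0$ kills $y$; thus
$\ker\omega_\mu=\{(x,0)\in V_1:\mu\perp\Im x\}$.

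Next I would insert the explicit formula $(u(s),v(s))=e^{-sA_\t}(\xi,\eta)$ from the proof of Proposition~\ref{monte}, namely
\[
v(s)=\eta\cos(|\t|s)+\frac{\xi^T\t}{|\t|}\sin(|\t|s),\qquad u(s)=\xi-\frac{\sin(|\t|s)}{|\t|}\,\t\eta^T-\frac{1-\cos(|\t|s)}{|\t|^2}\,\t\t^T\xi,
\]
with $(u,v)\equiv(\xi,\eta)$ when $\t=0$. Requiring $v\equiv 0$ forces $\eta=0$ and, if $\t\neq 0$, $\xi^T\t=0$; plugging back, both time-dependent pieces of $u$ vanish (since $\t\eta^T=0$ and $\t\t^T\xi=\t(\xi^T\t)^T=0$), so $u\equiv \xi$ and the curve is $\gamma(s)=(s\xi,0,0)$ by~\eqref{senza}. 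Abnormality of this control then amounts to the existence of a nonzero $\mu\perp\Im\xi$, which is the same as $\Im\xi\neq\R^q$; note that $|\xi|^2+|\eta|^2>0$ together with $\eta=0$ forces $\xi\neq 0$. The converse direction is immediate: on a curve $(s\xi,0,0)$ with $\Im\xi\subsetneq\R^q$, any $\mu\neq 0$ orthogonal to $\Im\xi$ verifies the kernel condition and witnesses abnormality.

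The main obstacle will be the careful distinction between the parameter $\t\in V_2$ of the normal Hamiltonian flow (present in $e^{-sA_\t}$) and the Lagrange multiplier $\mu\in V_2^*$ witnessing abnormality; these two dual variables are a priori unrelated, and the same abnormal curve $s\mapsto(s\xi,0,0)$ arises from many different normal parametrizations $(\xi,0,\t)$, including $\t=0$ and every $\t\neq 0$ with $\xi^T\t=0$. A secondary point is that the paper defines abnormality through the endpoint map while the argument uses the equivalent PMP formulation; the pointwise character of the kernel condition makes the restriction to subsegments immaterial, so the conclusion is unaffected by the distinction between abnormality and strict normality.
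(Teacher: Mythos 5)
Your proof is correct and essentially the same as the paper's: both rest on the characterization that the control $(u(s),v(s))=e^{-sA_\t}(\xi,\eta)$ is abnormal iff $A_\sigma(u(s),v(s))\equiv 0$ for some $\sigma\in\R^q\setminus\{0\}$ (the paper cites \cite{MontanariMorbidelli15}; you re-derive it from the PMP, which is the same condition since $A_\sigma$ represents $\omega_\sigma$), then compute $\ker A_\sigma=\{(\xi,0):\Im\xi\perp\sigma\}$ and extract $\eta=0$ and $\xi^T\t=0$ (when $\t\neq 0$), yielding $u\equiv\xi$ and $\Im\xi\neq\R^q$. The only presentational difference is that you read off the constraints directly from the explicit trigonometric form of $(u(s),v(s))$, whereas the paper evaluates the condition and its $s$-derivative at $s=0$; both yield the same constraints.
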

  If $q=1$, all extremals are strictly normal. Recall also    that it is known from second-order analysis that in step-2 Carnot groups all abnormal curves are also normal. See \cite[Corollary~12.14]{AgrachevBarilariBoscain} and  \cite[Section~20.5]{AgrachevSachkov04}.

\begin{proof}
By
\cite[Sect.~3]{MontanariMorbidelli15}, we know that an extremal curve $\gamma_{(\xi, \eta, \t)} =(x_{(\xi, \eta, \t)},y_{(\xi, \eta, \t)},t_{(\xi, \eta, \t)})$ corresponding to a given control
$(u(s),v(s))=e^{-s A_\t}(\xi ,\eta)$  is abnormal if and only of there is $\sigma\in\R^q\setminus\{0\}$ such that
\begin{equation}\label{abno}
A_\s e^{-s A_\t}(\xi, \eta)=(0,0)\quad\text{ for all $s\in\R$.} \end{equation}

In order to show the ``if'' part, consider $(u(s), v(s))=(\xi,0)$ for all $s\in[0,T]$,  where $\Im \xi\neq \R^q$.   Take $\sigma\in (\Im\xi)^\perp \setminus\{0\}$. We have $A_\s(\xi,0)=(0,-\xi^T\sigma)=(0,0)$ and~\eqref{abno} follows.

 To show the ``only if'' part,
let us look first at case $\t=0$. In this case $(u(s), v(s))=(\xi, \eta)$ and condition~\eqref{abno} furnishes $A_\s (\xi, \eta)=(\s \eta^T, -\xi^T\s)=(0,0)$. Since $\sigma\in\R^q $ must be nontrivial, first coordinate gives $\eta=0$. The second is equivalent to $P_\s\xi=0$.
Thus, it must be $P_\s^\perp\xi\neq 0$. Existence of such a $\sigma\neq 0$ is equivalent to condition $\Im\xi\neq \R^q$.

Let finally $(u(s), v(s))=e^{-sA_\t  }(\xi,\eta)$ be abnormal  with $\t\neq 0$. From the discussion above, given $\s\in\R^q\setminus\{0\}$, we have $\ker A_\s=\{(\xi, 0): \Im \xi\subset\sigma^\perp\}$.
Evaluating~\eqref{abno} and its $s$-derivative at $s=0$, we get
  $
 A_\s(\xi, \eta)=(0,0)$ and $A_\s A_\t(\xi, \eta)=A_\s(\t\eta^T, -\xi^T\t)= (0,0)$.
Therefore, we get
\[
 \eta=0,\quad \Im\xi\subset\sigma^\perp,\quad \xi^T\t=0,\quad
 \Im(\t\eta^T)\subset\sigma^\perp .
\]
Requirements $\eta=0$ and $\xi^T\t=0$ already imply that  $A_\t(\xi, 0)=(0,0)$ and $\Im\xi\subset\t^\perp$, which is a nontrivial subspace. Therefore $e^{-sA_\t}(\xi, 0)=(\xi,0)$ for all $s$, and  $\Im\xi\neq\R^q$, as we wished.
\end{proof}

  We briefly recall the notion of conjugate point.

\begin{definition}\label{pcon} Recall that, given an extremal $\gamma(\cdot, \bar\xi, \bar\eta, \bar\t)$ with $|(\bar\xi, \bar\eta)| >0$  and a time $\bar s>0$, we say that $\bar s$ is a conjugate time for $\gamma(\cdot, \bar\xi,\bar\eta, \bar \t)$ if
the differential of the map $( \xi, \eta, \t) \in\G_{qp} \mapsto \gamma(\bar s,\xi, \eta, \t)\in\G_{qp}$
is singular at $( \bar\xi, \bar\eta, \bar \t)$. \end{definition}

\begin{remark}\label{capperi} If $\gamma(\cdot,\xi,\eta,\t)$ is abnormal, then all times  $\bar s>0 $ are trivially conjugate times  (see \cite[Remark 8.46]{AgrachevBarilariBoscain}). If instead $\gamma(\cdot ,\xi,\eta, \t)$ is not abnormal in any subsegment $[0,s]$, it is known that there is a strictly positive  smallest conjugate time $  t_{\textup{conj}}$. Furthermore, we have $  t_{\textup{conj}}\geq t_{\textup{cut}}$. The  time $t_{\textup{conj}}$ is usually called \emph{first conjugate time}. The corresponding point $\gamma(t_{\emph{conj}},\bar\xi,\bar\eta,\bar\t)$ is called  \emph{first conjugate point}.  All these facts are proved in~\cite[Section 8.8]{AgrachevBarilariBoscain}.
\end{remark}


\section{Upper estimate  \texorpdfstring{$t_{\cut}\leq 2\pi/|\t|$}{upper}}\label{dasopra}
In this section we consider extremal curves $\gamma(\cdot,\xi,\eta,\tau)$ which are not Euclidean lines (in particular strictly normal, see Proposition~\ref{sedia}  and Remark~\ref{capperi}). We show  that  if $p\geq 2 $, then  all points of the form $\gamma(\tfrac{2\pi}{|\t|}, \xi , \eta, \t)$ with $\t\neq 0$ and $|\eta|+|P_\t\xi|>0$ are conjugate points. See   Definition~\ref{pcon}.   Among them, we characterize those  that are   reached by at least two different (unit-speed) minimizing
geodesics exiting from the origin (see Proposition~\ref{masso}). The remaining points are reached by a unique unit-speed minimizing curve.\
 In  $\G_{q1}$, i.~e. $p=1$ and $q\geq 2$, it is not true that all points  $\gamma(\frac{2\pi}{|\t|}, \xi, \eta, \t)$ are conjugate points.  Conjugate points in this limiting case are described in Proposition~\ref{piuno} for the sufficient part and in the separate Section~\ref{cinese}, Theorem~\ref{taglia} for the necessary part, which is slightly more technical.

Starting from the form of extremals established in Proposition~\ref{monte}, we get
\begin{equation}\label{ovest}
\begin{aligned}
 x\Big(\frac{2\pi}{|\t|},\xi,\eta,\t\Big)& =
 \frac{2\pi}{|\t|} P_\t^\perp\xi
 \\ y\Big(\frac{2\pi}{|\t|},\xi,\eta,\t\Big) &=0
\\
t\Big(\frac{2\pi}{|\t|},\xi,\eta,\t\Big)
& = \frac{\pi}{|\t|^2} ( |P_\t\xi|^2
+|\eta|^2)\frac{\t}{|\t|}
 -\frac{2\pi}{|\t|^2}  P_\t^\perp \xi\frac{\xi^T\t}{|\t|}.
\end{aligned}
\end{equation}

\begin{proposition}[Conjugate points  for $p\geq 2$]\label{massetto} Let $q\in\N$ and $p\geq 2$.
 Given $(\xi, \eta, \t)\in V_1\times V_2$, assume that $\t\neq 0$ and $|\eta|^2+|P_\t\xi|^2>0$ and consider  the extremal $\gamma(\cdot, \xi ,\eta, \t)$. Then, the time $\bar s:= \frac{2\pi}{|\t|}$ is conjugate.
\end{proposition}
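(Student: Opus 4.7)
The plan is to exhibit a nonzero element in the kernel of the differential of the map $(\xi',\eta',\t')\mapsto \gamma(\bar s,\xi',\eta',\t')$ at the point $(\xi,\eta,\t)$, where $\bar s=2\pi/|\t|$. Since the three arguments play quite different roles in~\eqref{pari}, I would first attempt a variation supported only in the $\eta$-slot, namely $(\delta\xi,\delta\eta,\delta\t)=(0,\delta\eta,0)$ with $\delta\eta\in\R^p\setminus\{0\}$.

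At the candidate time $\bar s$ one has $|\t|\bar s/2=\pi$, hence $T(\pi)=0$, $\sin\pi=0$, $\cos\pi=-1$. Inspecting~\eqref{pari}, every $\eta$-dependent contribution in the formulas for $x$ and $y$ carries a factor of either $T(|\t|s/2)$ or $\sin(|\t|s/2)$, so the partial derivatives $\partial_\eta x(\bar s,\xi,\eta,\t)$ and $\partial_\eta y(\bar s,\xi,\eta,\t)$ vanish identically. The entire obstruction is therefore concentrated in the $t$-component.

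Differentiating the $t$-component with respect to $\eta$ produces two contributions: one from $|\eta|^2$ inside the first summand of~\eqref{pari}, and one from the factor $-\eta\sin(|\t|s/2)$ inside the bracket of the second summand. The latter vanishes at $\bar s$ because $\sin\pi=0$, leaving
\[
 \partial_\eta t(\bar s,\xi,\eta,\t)[\delta\eta]=2\bar s^{\,2}\,U(\pi)\,\langle\eta,\delta\eta\rangle\,\frac{\t}{|\t|}.
\]
Since $U(\pi)=1/(4\pi)\neq 0$, it suffices to pick $\delta\eta\neq 0$ with $\langle\eta,\delta\eta\rangle=0$.

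This is exactly where the hypothesis $p\geq 2$ enters: if $\eta\neq 0$ the orthogonal complement $\eta^\perp\subset\R^p$ has dimension $p-1\geq 1$; if $\eta=0$ any nonzero $\delta\eta$ works (the scalar product is already zero). Taking such a $\delta\eta$, the triple $(0,\delta\eta,0)\in V_1\times V_2$ lies in the kernel of the differential at $(\xi,\eta,\t)$, which is therefore singular, proving that $\bar s$ is a conjugate time. I expect no real obstacle: the argument is a direct reading of the vanishing of $T$ and $\sin$ at $\pi$ in Proposition~\ref{monte}, combined with the elementary fact that perpendicular complements are nontrivial in dimension $\geq 2$. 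The failure of this strategy in the limiting case $p=1$ (where $\eta\in\R$ automatically aligns with every direction, so $\langle\eta,\delta\eta\rangle=0$ forces $\delta\eta=0$ as soon as $\eta\neq 0$) is precisely what necessitates the separate analysis of Proposition~\ref{piuno} and Section~\ref{cinese}.
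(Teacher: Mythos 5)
Your proof is correct and follows essentially the same strategy as the paper's: both exhibit a nonzero direction $\delta\eta\perp\eta$ in the $\eta$-slot (available for any $\eta$ when $p\geq 2$) along which the partial derivative of $(\xi',\eta',\t')\mapsto\gamma(\bar s,\xi',\eta',\t')$ vanishes, using $T(\pi)=\sin\pi=0$ in~\eqref{pari} so that the only surviving $\eta$-dependence is the factor $|\eta|^2$ in the $t$-component. The one small difference is in the case $\eta=0$: the paper closes it by a continuity/limiting argument (conjugate as a limit of conjugate covectors $(\bar\xi,\e\eta_0,\bar\t)$), whereas your explicit formula $\partial_\eta t(\bar s,\xi,\eta,\t)[\delta\eta]=2\bar s^{\,2}U(\pi)\langle\eta,\delta\eta\rangle\,\t/|\t|$ already vanishes identically when $\eta=0$, so that case needs no separate treatment; this is a slight tidying of the exposition rather than a different route.
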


  As  we already observed, if $ \t =0$ or $\t\neq 0$ and  $ |\eta|+|P_\t\xi| =0$, then $\gamma$ has the form $\gamma(s)= (s\xi,  0,0 )$.
If $\Im\xi=\R^q$, then it must be  $\t=0$, because there is no $\t\neq 0$ such that $P_\t\xi=0$. In this case,  $\gamma $ is strictly normal and it has no conjugate points.  If  $\Im\xi\neq\R^q$, then   $\gamma$ is  abnormal and all its points are trivially conjugate (see Proposition~\ref{sedia}).
\begin{proof}
Let $(\bar \xi, \bar \eta,\bar \t)\in V_1\times V_2\simeq T^*_0\G_{qp}$
with $|\bar \t|(|P_{\bar\t}\bar\xi|+|\bar\eta|
)>0$. Consider the cylinder $\Lambda =\{(\xi, \eta, \t)\in V_1\times V_2: |\xi|^2+|\eta|^2=|\bar\xi|^2+|\bar\eta|^2\}\subset \G_{qp}\simeq T^*_0\G_{qp}$.
  Consider
the  map $(s,\xi, \eta,\t)\in\R\times\Lambda \mapsto \gamma(s,\xi, \eta, \t)\in V_1\times V_2$.
To prove the proposition, it suffices to show that $\det [\p_s \gamma, d_{\Lambda}\gamma]|_{(s, \xi, \eta, \t )=(\frac{2\pi}{|\bar\t|}, \bar\xi,\bar\eta,\bar\t)}=0$. Here $d_\Lambda \gamma\in\R^{N \times( N -1)}$ denotes   any family of independent  derivatives    in $T_{(\bar\xi, \bar\eta,\bar\t)}\Lambda$ with $N:=\operatorname{dim}\G_{qp}=qp+p+q$. If   $\bar \eta\neq 0$, choose $v\in\R^p\setminus\{0\}$ such that $\langle v,\bar\eta\rangle=0$ and consider the derivative $D_v:=v\cdot\nabla_\eta$  (note that existence of such a    direction~$v$ needs  $p\geq 2$).
Since~\eqref{ovest} is radial in $\eta$, it turns out easily that $D_v \gamma(\frac{2\pi}{|\bar\t|}, \bar\xi,\bar\eta, \bar\t)=0$. This implies that the determinant above vanishes.
If instead $\eta=0$,
we have
\begin{equation*}
 \gamma\Big(\frac{2\pi}{|\bar\t|}, \bar\xi, 0, \bar\t\Big) = \lim_{\e\to 0}  \gamma  \Big(\frac{2\pi}{|\bar\t|},\bar \xi, \e \eta_0 , \bar\t\Big)
\end{equation*}
where $\eta_0 \in\R^p$ is a fixed nonzero vector. Thus, the point with $\bar\eta=0$ is conjugate, being a limit of a family of conjugate points (recall that $\gamma$ is a smooth map).
\end{proof}

\begin{proposition}[Conjugate points  for $q\geq 2$ and  $p=1$, sufficient condition]\label{piuno} Let $q\geq 2$ and  consider the model $\G_{q1}$.
 Given $(\xi, \eta, \t)\in V_1\times V_2$, assume that $\t\neq 0$ and $|\eta|^2+|P_\t\xi|^2>0$ and consider  the extremal $\gamma(\cdot, \xi ,\eta, \t)$.
 If
 \begin{equation}\label{comprendo}
  \eta P_\t^\perp\xi= 0,
 \end{equation}
then the time $\bar s:= \frac{2\pi}{|\t|}$ is conjugate.
\end{proposition}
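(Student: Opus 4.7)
My plan is to follow the strategy of Proposition~\ref{massetto} and exhibit a nonzero tangent vector in $V_1\times V_2\simeq T^*_0\G_{q1}$ lying in the kernel of the differential of the map $(\xi,\eta,\tau)\mapsto\gamma(\bar s,\xi,\eta,\tau)$ at the given covector. Since $p=1$, in~\eqref{comprendo} the object $\eta P_\tau^\perp\xi\in\R^q$ is simply the product of a scalar $\eta\in\R$ with a vector $P_\tau^\perp\xi\in\R^q$, so the condition forces either $\eta=0$ or $P_\tau^\perp\xi=0$; I would handle these two cases separately and in each case produce a kernel direction by inspection of~\eqref{ovest}.

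In the case $\eta=0$ (where the non-triviality assumption then forces $P_\tau\xi\neq 0$), the key observation is that at $\bar s=2\pi/|\tau|$ the coordinates $x(\bar s,\xi,\eta,\tau)$ and $y(\bar s,\xi,\eta,\tau)=0$ are independent of $\eta$, while $t(\bar s,\xi,\eta,\tau)$ depends on $\eta$ only through $|\eta|^2=\eta^2$. Consequently all $\eta$-partial derivatives vanish at $\eta=0$ and the direction $(\delta\xi,\delta\eta,\delta\tau)=(0,1,0)$ is a nontrivial kernel vector, yielding conjugacy.

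In the case $P_\tau^\perp\xi=0$ I would set $\hat\tau=\tau/|\tau|$ and $\alpha=\xi\cdot\hat\tau$, so that $\xi=\alpha\hat\tau$ and the non-triviality hypothesis becomes $\alpha^2+\eta^2>0$. Formula~\eqref{ovest} then collapses to
\[
 \gamma(\bar s,\xi,\eta,\tau)=\Bigl(0,\,0,\,\frac{\pi(\alpha^2+\eta^2)}{|\tau|^2}\hat\tau\Bigr),
\]
a function of $(\xi,\eta)$ depending only on the quadratic form $\alpha^2+\eta^2$. I would then exploit the infinitesimal rotation in the $(\alpha,\eta)$-plane: the smooth path $\varepsilon\mapsto\bigl((\alpha+\varepsilon\eta)\hat\tau,\,\eta-\varepsilon\alpha,\,\tau\bigr)$ preserves $P_\tau^\perp\xi\equiv 0$ and satisfies $(\alpha+\varepsilon\eta)^2+(\eta-\varepsilon\alpha)^2=(1+\varepsilon^2)(\alpha^2+\eta^2)$, so $\gamma(\bar s,\cdot)$ has vanishing first derivative along the tangent direction $(\eta\hat\tau,-\alpha,0)$, which is nonzero since $(\alpha,\eta)\neq(0,0)$.

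The only real subtlety is recognizing that for $p=1$ the algebraic structure of~\eqref{comprendo} forces a product vanishing, splitting the statement into these two special geometries --- each of which admits an elementary symmetry of~\eqref{ovest} at $\bar s=2\pi/|\tau|$. Beyond this case distinction there is no substantive obstacle: both verifications are one-line substitutions into~\eqref{ovest} followed by differentiation of a one-parameter family, with no need for a full Jacobian determinant computation.
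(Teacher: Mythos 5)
Your proof is correct and follows the same underlying idea as the paper's: exhibit a nonzero direction in the kernel of the differential of the endpoint map $(\xi,\eta,\tau)\mapsto\gamma(\bar s,\xi,\eta,\tau)$. Where the paper uses the single vector field $Z=-\langle\xi,\tau\rangle\partial_\eta+\eta D_\tau$ and verifies $Z\gamma=0$ under hypothesis~\eqref{comprendo}, your two-case split (according to $\eta=0$ or $P_\tau^\perp\xi=0$) produces in each case a kernel vector proportional to this same $Z$, so the arguments coincide up to presentation.
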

Note that assumption \eqref{comprendo} does not appear in Proposition~\ref{massetto}, where $p\geq 2$. We shall prove in Theorem~\ref{taglia}  that condition~\eqref{comprendo} is also necessary to have $\frac{2\pi}{|\t|}$ as a conjugate time. We do not discuss the case $p=q=1$: this is the familiar Heisenberg group~$\H^1$, and in this case, using rotation invariance around $t$-axis, it is easy to see that all cut points are conjugate.   Extremals in the Heisenberg group $\G_{11}$ can be read in plenty of papers,  e.g.~\cite{Monti00,AmbrosioRigot04,HajlaszZimmermann}. See also the general discussions in Section 13.2, 13.3 of \cite{AgrachevBarilariBoscain}.

\begin{proof}
Let $(\bar \xi, \bar \eta,\bar \t)\in V_1\times V_2\simeq T^*_0\G_{qp}$
with $|\bar \t|(|P_{\bar\t}\bar\xi|+|\bar\eta|
)>0$. As in the proof of Proposition~\ref{massetto} above, consider the cylinder $\Lambda =\{(\xi, \eta, \t)\in V_1\times V_2: |\xi|^2+\eta^2=|\bar\xi|^2+ \bar\eta^2\}\subset \G_{q1}\simeq T^*_0\G_{q1}$.
  Consider
the  map $(s,\xi, \eta,\t)\in\R\times\Lambda \mapsto \gamma(s,\xi, \eta, \t)\in V_1\times V_2$.
To prove the proposition, it suffices to show that $\det [\p_s \gamma, d_{\Lambda}\gamma]|_{(s, \xi, \eta, \t )=(2\pi/|\bar\t| , \bar\xi,\bar\eta,\bar\t)}=0$. Here $d_\Lambda \gamma\in\R^{N \times( N -1)}$ denotes   any family of independent  derivatives    in $T_{(\bar\xi, \bar\eta,\bar\t)}\Lambda$ with $N:=\operatorname{dim}\G_{q1}=2q+1 $.  Consider the vector field $Z:=-\langle\xi,\t\rangle\p_\eta+ \eta D_\t$, where $D_\t=\langle \t,\nabla_\xi\rangle$. Note that $Z$ is tangent to~$\Lambda$ and $Z\neq 0$, because $\eta^2+|P_\t\xi|^2\neq 0$.
Taking the form~\eqref{ovest} of $\gamma(\frac{2\pi}{|\t|},\xi,\eta,\t)$ into account, after a computation we get
   \begin{equation}\label{bottiglia}
Zx\Big(\frac{2\pi}{|\bar \t|},\bar \xi,\bar \eta,\bar \t\Big) =(-\langle\bar \xi,\bar \tau\rangle\p_\eta+\bar \eta D_{\bar \t})\frac{2\pi}{|\bar \t|}P_{\bar \t}^\perp\bar \xi=
\frac{2\pi}{|\bar \t|}P_{\bar  \t}^\perp\bar \t=0                                                   \end{equation}   and  $Zy(\frac{2\pi}{|\bar \t|},\bar \xi,\bar \eta,\bar \t)=0$. Also,
\begin{equation}\label{bottiglia2}
\begin{aligned}
 Zt\Big(\frac{2\pi}{|\bar \t|},\bar \xi,\bar \eta,\bar \t\Big)  & =\frac{\pi}{|\bar \t|^3}(-\langle\bar \xi,\bar \tau\rangle\p_\eta+\bar \eta D_{\bar \t})\Big((|P_{\bar \t}\bar \xi|^2+\bar \eta^2
)\bar \t-2P_{\bar \t}^\perp\bar \xi\bar \xi^T\bar \t\Big)
\\&
=\frac{\pi}{|\bar \t|^3}\Big\{-2\bar \eta\langle\bar \xi,\bar \t\rangle\bar \t +2\bar \eta  \langle P_{\bar \t}\bar \xi, D_{\bar \t} P_{\bar \t}\bar \xi\rangle \bar \t -2\bar \eta P_{\bar \t}^\perp \bar \xi D_{\bar \t}\langle \bar \xi,\bar \t\rangle\Big\}
\\& =-\frac{2\pi\bar \eta}{|\bar \t|}P_{\bar \t}^\perp\bar \xi,
\end{aligned}
\end{equation}
because  $D_{\bar \t} P_{\bar \t}^\perp\bar \xi = P_{\bar \t}^\perp \bar \t=0$ and terms along $\t$ cancel.
From these computation, we see that assumption $\bar \eta P_{\bar \t}^\perp\bar \xi=0$ implies that   $\det [\p_s \gamma, d_{\Lambda}\gamma]|_{(s, \xi, \eta, \t )=(\frac{2\pi}{|\bar\t|}, \bar\xi,\bar\eta,\bar\t)}=0$.
\end{proof}

  Propositions \ref{massetto} and \ref{piuno}  imply estimate $t_\cut\leq  2\pi/|\t| $ if $p\geq 2$. If $q\geq 2$ and  $p=1$, the upper estimate follows only if $\eta P_\t^\perp\xi= 0$.

\begin{remark} The upper estimate in case   $q\geq 2$,     $p=1$ and $\eta P_\t^\perp\xi\neq 0$  can be obtained easily
from the fact that if $\eta\neq 0$, then $\gamma(\frac{2\pi}{|\t|}, \xi, \eta, \t)=\gamma(
\frac{2\pi}{|\t|}, \xi, -\eta, \t)$. Thus there are two different extremals reaching the point
$\gamma(\frac{2\pi}{|\t|},\xi,\eta,\t)$
with length  $\frac{2\pi}{|\t|}|(\xi,\eta)|$.
\end{remark}
The remark above concludes the proof of the upper estimate in all cases.
In Section~\ref{dalbasso} we shall prove  the opposite estimate  $t_\cut\geq 2\pi/|\t|   $.

 In next proposition, we find for all $p,q$, all points $\gamma(\frac{2\pi}{|\t|}, \xi, \eta, \t)\in\G_{qp}$  reached by more than one  extremal of length $\frac{2\pi}{|\t|}|(\xi,\eta)|$.

 In the statement of the following proposition, to have a clean exposition, we take for granted equality $t_\cut=
2\pi/|\t| $ for all extremals different from Euclidean lines. (Note that arguments in Proposition~\ref{masso} and in  Section~\ref{dalbasso} are    independent of each other.)

  It is easy to show that for an extremal of the form $\gamma(s,\xi, \eta, \t)$ with $\t\neq 0$ and $\eta\neq 0$,  we have $ \gamma(\frac{2\pi}{|\t|}, \xi, \eta, \t)=\gamma(\frac{2\pi}{|\t|}, \xi, R\eta , \t)$ for all $R\in O(p)$. Therefore the point is reached by more than one  minimizer  of equal speed.   There are  also cut points with $\eta=0$ reached by at least two minimizers of equal speed, and we now characterize them. All other cut points are reached by a unique minimizer.

\begin{proposition}[Cut points reached by  at least  two different arclength minimizers] \label{masso}
Let $\t\in\R^q\setminus\{0\}$. Let  $|\xi|^2+|\eta|^2=1 $ and assume also that $|\eta|^2+ |P_\t\xi |^2>0$. Then there is $(\xi', \eta',\t')\neq (\xi,\eta,\t)$ such that
\begin{equation}\label{joao}
 \left\{\begin{aligned}
&|\xi'|^2+|\eta'|^2=|\xi|^2+|\eta|^2=1,\quad  |\t'|=|\t|\quad\text{and}
\\&
  \gamma(\tfrac{2\pi}{|\t'|},\xi', \eta',\t')=\gamma(\tfrac{2\pi}{|\t|},\xi, \eta,\t)
\end{aligned}\right.
\end{equation}
if and only if
either  $\eta\neq 0$, or
\begin{equation}
\label{horror}
 \eta=0
\quad\text{ and }\quad   \xi^T\t\;\text{ is not orthogonal to }\; \ker P_\t^\perp \xi.
\end{equation}
\end{proposition}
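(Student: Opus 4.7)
The strategy is to exploit the explicit endpoint formulas~\eqref{ovest}, noting that $x(\tfrac{2\pi}{|\t|},\xi,\eta,\t)$ does not involve $\eta$, that $y(\tfrac{2\pi}{|\t|},\xi,\eta,\t)=0$, and that $t(\tfrac{2\pi}{|\t|},\xi,\eta,\t)$ depends on $\eta$ only through $|\eta|^2$. I would split the proof into the two cases $\eta\neq 0$ and $\eta=0$.

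In the case $\eta\neq 0$ the ``if'' direction is immediate by constructively producing a second triple. I would keep $\xi'=\xi$ and $\t'=\t$ and modify only $\eta$: if $p\geq 2$, choose $\eta'=R\eta$ for a rotation $R\in O(p)$ with $R\eta\neq \eta$ (available since $\eta\neq 0$); if $p=1$, take $\eta'=-\eta$. Both choices satisfy $|\eta'|=|\eta|$, and the endpoint coincides because of the $|\eta|^2$-dependence noted above.

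In the case $\eta=0$ (so $|P_\t\xi|^2>0$ and $|\xi|=1$), the main step will be to show that $\t'=\t$ is forced. Matching the $x$-components in~\eqref{ovest} gives $P_{\t'}^\perp\xi'=P_\t^\perp\xi$; combining this with $|\xi'|^2+|\eta'|^2=1$ and $|\t'|=|\t|$ yields $|P_{\t'}\xi'|^2+|\eta'|^2=|P_\t\xi|^2$, so that the scalar coefficients in the first summand of the $t$-components match. Matching $t$ therefore reduces to
\begin{equation*}
|P_\t\xi|^2\,(\t'-\t) \;=\; 2\,P_\t^\perp\xi\,(\xi'^T\t'-\xi^T\t).
\end{equation*}
The right-hand side is orthogonal to $\t$ because $\Im(P_\t^\perp\xi)\subset \t^\perp$; taking the scalar product with $\t$ gives $|P_\t\xi|^2(\langle \t',\t\rangle-|\t|^2)=0$, hence $\langle \t',\t\rangle=|\t|^2$. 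Together with $|\t'|=|\t|$, Cauchy--Schwarz forces $\t'=\t$. This projection argument is the delicate point of the proof.

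Once $\t'=\t$ is established, $P_\t^\perp\xi'=P_\t^\perp\xi$ forces $\xi'=\xi-\t c^T$ for some $c\in\R^p$, and the residual $t$-equation collapses to $P_\t^\perp\xi\, c=0$, i.e.~$c\in\ker P_\t^\perp\xi$. A direct computation of $|\xi'|^2$ combined with the speed normalization yields $|\eta'|^2=2\langle c,\xi^T\t\rangle-|\t|^2|c|^2$. A triple distinct from $(\xi,0,\t)$ corresponds to $c\neq 0$, and nonnegativity of $|\eta'|^2$ can be arranged by taking a sufficiently small positive multiple of some $c_0\in\ker P_\t^\perp\xi$ with $\langle c_0,\xi^T\t\rangle>0$ (flipping sign if necessary). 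Existence of such a $c_0$ is exactly condition~\eqref{horror}, concluding both directions.
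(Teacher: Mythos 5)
Your proof is correct and follows essentially the same route as the paper: match the $x$- and $t$-components of the endpoint, force $\t'=\t$, and then parametrize the remaining freedom. The bookkeeping differs slightly but equivalently --- you deduce $\t'=\t$ by pairing with $\t$ and invoking Cauchy--Schwarz rather than projecting onto $\Span\{\t,\t'\}$, and you parametrize $\xi'-\xi$ by $c\in\ker P_\t^\perp\xi$ and take a small multiple of a suitable $c_0$, whereas the paper parametrizes by $\a'=\xi'^T\t/|\t|$ and reflects through the least-squares solution $\a_{\mathrm{LS}}$.
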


\begin{remark}\label{ser} \begin{enumerate}[nosep]
    \item Assumption $\t\neq 0$ and $|\eta|^2+ |P_\t\xi |^2>0$ ensure that the curve $\gamma(s,\xi,\eta, \t)$ is not an Euclidean line contained in the plane $t=0$. See Remark~\ref{notflat}.
    \item The second assumption in~\eqref{horror}
implicitely implies that the columns $P_\t^\perp \xi_1,\dots, P_\t^\perp\xi_p$  of $P_\t^\perp\xi$ are linearly dependent in $\R^q $ (i.e.~, $\operatorname{rank}P_\t^\perp\xi <p$) and that $\xi^T\t\neq 0\in\R^p$.   If $p=1$, this means $P_\t^\perp\xi = 0$ and $P_\t\xi\neq 0$.

\item In terms of coordinates $(x,y,t)$ on $V_1\times V_2$, points reached by a unique unit-speed length-minimizer will be characterized in Section~\ref{cinque}.

\item If $q=1$, the group $\G_{1p}=\H^p$ is the familiar  $p$-dimensional Heisenberg group,  and it is easy to see that if $\t\neq 0$   and $\eta=0\in\R^{p\times 1}$,    condition~\eqref{horror} is  satisfied by any $\xi\in\R^{1\times p}\setminus\{0\}$.  In case $q=1$, $P_\t^\perp\xi\in\R^1$ vanishes trivially.

\end{enumerate}\end{remark}

\begin{proof}[Proof of Proposition \ref{masso}]
 Let $(\xi,\eta,\tau)\in  T^*_{0}(V_1\times V_2)\simeq V_1\times V_2$ be such that
  $|\xi|^2+|\eta|^2=1$.
 We first characterize all $(\xi',\eta',\t')$ satisfying \eqref{joao}.
By \eqref{ovest}, condition
 $x(\frac{2\pi}{|\t'|}, \xi',\eta',\tau')=x(\frac{2\pi}{|\t|}, \xi,\eta, \t)$  and $|\t|=|\t'|$ give
\begin{equation}\label{x}
  P_\tau^\perp \xi  = P_{\t'}^\perp\xi',
\end{equation}
while assumption $t(\frac{2\pi}{|\t'|}, \xi',\eta',\tau')=t(\frac{2\pi}{|\t|}, \xi,\eta, \t)$ gives  \begin{equation}\label{t}
\begin{aligned}
 &  (|P_\t\xi|^2 +|\eta|^2 )\tau -2 P_\t^\perp\xi \,\xi^T\t
=  (|P_{\t'}\xi'|^2+|\eta'|^2\Big)\tau' -2
P_{\t'}^\perp\xi'  \xi'^T \t'.\end{aligned}
\end{equation}
Now we write $P_\t^\perp\xi =P_{\t'}^\perp \xi'=:v=[v_1,\dots, v_p]$, where the column space of $v$ satisfies $\Im v\subset  \Span\{\t,\t'\}^\perp$.
We have then
\begin{equation*}
 |\xi|^2+|\eta|^2
  =1=|\eta|^2+
 |P_\t \xi|^2+|v|^2\quad\text{and}\quad
|\xi'|^2+|\eta'|^2
=1=|\eta'|^2+
|P_{\t'}  \xi'|^2+
 |v |^2.
\end{equation*}
 Thus $|\eta'|^2+
|P_{\t'}^\perp\xi'|^2
= |\eta|^2+ |P_\t\xi|^2 $.
Projecting orthogonally~\eqref{t}
along $\Span\{\t,\t'\}$, we get then
$
 ( |\eta|^2+|P_\t\xi|^2 )(\t-\t')=0
$.
The parenthesis can not vanish by assumption.  This forces $\t'=\t$. Thus,~\eqref{t} becomes
 $ P_\t^\perp\xi\, \xi^T\t  = P_{\t }^\perp \xi'\,\xi'^T\t$.
Passing to a shorter notation  write  $\a:=\frac{\xi^T \t}{|\t|}$ and $\a' :=\frac{ \xi'^T \t}{|\t|}\in\R^p$
(keep in mind that $|P_\t\xi|^2=\frac{|\xi^T\t|^2}{|\t|^2}$). Thus, $(\xi', \eta', \t')$ satisfies all conditions~\eqref{joao}  if and only if $\t'=\t$, $P_{\t}^\perp\xi' =P_\t^\perp\xi =:v $ and there are $\a'  $ and  $\eta'\in\R^p$  such that
\begin{equation}\label{bianca}
\left\{\begin{aligned}
 & v\a'=v\a
 \\&|\eta'|^2+ | \a'| ^2 =|\eta|^2+|\a|^2= 1- |v |^2,
\end{aligned}\right.
\end{equation}
where $v=[v_1\dots, v_p]\in\R^{q\times p}$.
In order to conclude the proof, we need to understand for which given   $\eta,\alpha\in\R^p$ the pair  $\eta',\alpha'\in\R^p
$ satisfying~\eqref{bianca} must be chosen uniquely in the form $\eta'=\eta$
and $\alpha'=\a$. Note first that if $\eta\neq 0$, any choice $\eta'=R\eta$ with $R\in O(p)$, $R\neq I_p$ gives a solution $\eta'=R\eta$ and $\a'=\a$ different from $\eta,\a$  (if $p=1$, just choose $\eta' =-\eta$).  Therefore, to have uniqueness it must be $\eta=0$. If $\eta=0$, then~\eqref{bianca} becomes
\[
 v\a'=v\a\;\text{and}\; |\eta'|^2+ | \a'| ^2 = |\a|^2= 1-  |v |^2.
\]
If $\ker v=0$, then it must be $\a'=\a$, $\eta'=\eta$,  and we have uniqueness   (if $p=1$, this occurs when $P_\t^\perp\xi\neq 0\in\R^q$).   Let now $\ker v$ be nontrivial.
If $\a\in(\ker v)^\perp$, then $\a=\a_{\textup{LS}}$,  the  least-squares solution of the system $v\b=v\a$ with unknown $\b\in\R^p $. Then, $|\a'| \gneqq | \a|$ for all $\a'\neq \a$ solving $v\a'=v\a$. Then, it must be $\a'=\a =\a_{\textup{LS}}$ and we have again uniqueness in the choice of $\eta',\a'$.
Finally, if $\a\notin (\ker v)^\perp$, we can choose $\eta'=0$ and $\a'=2\a_{\textup{LS}}-\a\neq 0$. In this case $\a'\neq\a$, $|\a'|=|\a|$ and $v\a'=v\a$ as required.   If $p=1$ and $q\geq 2$, $\ker v\neq 0$ means  $v=0$. Then,  $\a_{\textup{LS}}=0$ and the non uniqueness choice becomes $\a'=-\a$.

To resume,
if $\eta\neq 0$, we can choose $\eta'=-\eta$, $\a'=\a$ and we have found a choice of $(\xi', \eta', \t')\neq(\xi,\eta,\t) $. If $\eta=0$, by assumption~\eqref{horror}, given $\eta=0$, $v=[v_1,\dots, v_p]\in\R^{q\times p}$ and $\a\neq 0$, there
is a second solution $\eta'=0$, $\a'=2\a_{\textup{LS}}-\a\neq\a$, where $\a_{\textup{LS}}$ solves $v\a_{\textup{LS}}=v\a$ and $\a_{\textup{LS}}\perp\ker v$.

We have ultimately proved that there is nonuniqueness if and only if either $\eta\neq 0$, or $\eta= 0$ and $\a\notin(\ker v)^\perp$. The proof is concluded.
 \end{proof}

 A careful inspection of the proof   above  shows that, if $p\geq 2$, when  the choice of $(\xi',\eta',\t')$ is not  unique, then  we have  an infinite,  continuous family of choices. If instead $p=1$ and $q\geq 2$, then in case of non uniqueness there are either two or infinitely many minimizers.

 We conclude this section with an example showing that the function $(\xi,\eta,\t)\in \G_{qp}\mapsto t_\cut(\xi,\eta,\t)$ is discontinuous for $q\geq 2$.
 \begin{example}
  \label{discontinua}
  Let   $q\geq 2$, let $\xi\in\R^{q\times p}\setminus\{0\}$   with $\Im\xi\subsetneq\R^q$   and $\t\in (\Im\xi)^\perp\setminus\{0\}$ (i.e.~$\xi=P_\t^\perp\xi$). Assume that $|\t|=1$ and consider $\eta\in\R^p\setminus\{0\}$. Then for any $\e>0$ we have the family of cut points
   \begin{equation*}
   \gamma(2\pi,\xi,\e\eta,\t)
   =
   (2\pi\xi,0,\e^2\pi|\eta|^2\t)\to (2\pi\xi,0,0)=\gamma(2\pi,\xi,0,\t).
  \end{equation*}
The corresponding cut-times are  $t_{\cut}(\xi, \e\eta,\t)=2\pi$ for all $\e>0$. However, $t_\cut(\xi, 0,\t)=+\infty$.
Note that the point $(2\pi\xi,0,0)$ in this example is abnormal.
 \end{example}

\section{Lower bound \texorpdfstring{$t_{\cut }\geq \frac{2\pi}{|\t|}$}{tcut geq 2pi/|tau|}}
 \label{dalbasso}
Here we prove the following lower bound for the cut-time.
\begin{proposition}\label{superbasso}
 Let $\bar u=u(\cdot, \bar\xi,\bar\eta,\bar\t)$ be a given extremal control with $\bar\t\neq 0$ and
 $|\bar\eta|+|P_{\bar\t}\bar\xi|>0$. Then the cut time of $\gamma_{\bar{u}}$ satisfies $t_\cut\geq \frac{2\pi}{|\bar\t|}$
\end{proposition}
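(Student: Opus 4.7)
By the homogeneity \eqref{benny}, I may rescale to assume $|\bar\tau| = 1$ and $|(\bar\xi, \bar\eta)| = 1$, so that $\gamma(\cdot, \bar\xi, \bar\eta, \bar\tau)$ has unit horizontal speed and the claim reduces to the statement that this extremal length-minimizes on every interval $[0, T]$ with $T \le 2\pi$. The plan is a direct Lagrangian calculus-of-variations argument in the flavor of \cite{BarilariBoscainGauthier12}.

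The first step is to reformulate the fixed-endpoint length problem as a $V_2$-unconstrained minimization for the augmented functional
\[
J[u, v] := \tfrac12\int_0^T \bigl(|u|^2 + |v|^2\bigr)\, ds - \Scalar{\bar\tau}{\textstyle\int_0^T Q\bigl((x, y), (u, v)\bigr)\, ds},
\]
where $(x, y)$ is the horizontal lift of $(u, v)$ starting from $0$. On any control whose lift ends at the prescribed point $P := \gamma(T, \bar\xi, \bar\eta, \bar\tau)$, the second term is the fixed constant $-\Scalar{\bar\tau}{P_t}$; hence minimizing the energy among admissible controls is equivalent to minimizing $J$ among controls constrained only by their first-layer endpoint $\int u = x_1$, $\int v = y_1$. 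A direct Euler-Lagrange computation shows that $\bar u(s) = e^{-sA_{\bar\tau}}(\bar\xi, \bar\eta)$ from~\eqref{tremo} is a critical point of $J$ under this reduced constraint, so $\bar\tau$ plays exactly the role of the Lagrange multiplier.

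The second step is to prove that $\bar u$ is a global minimum of $J$ for $T \le 2\pi$. Writing a competitor as $(u, v) = \bar u + w$ with $\int_0^T w\, ds = 0$ and expanding, an integration by parts of the cross terms (using the Euler-Lagrange equation for $\bar u$) yields
\[
J[\bar u + w] - J[\bar u] = \tfrac12\int_0^T |w(s)|^2\, ds - \int_0^T \Scalar{\bar\tau}{Q\bigl(\textstyle\int_0^s w(\sigma)\, d\sigma,\, w(s)\bigr)}\, ds =: \mathcal{Q}_T[w].
\]
I would then split $w = (w_x, w_y)$ with $w_x = P_{\bar\tau} w_x + P_{\bar\tau}^\perp w_x$ and observe, as in the computations leading to Proposition~\ref{monte}, that $\Scalar{\bar\tau}{Q(\cdot, \cdot)}$ depends only on $(P_{\bar\tau} w_x, w_y)$. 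The $P_{\bar\tau}^\perp w_x$-part of $w$ therefore contributes only the unconditionally non-negative quantity $\tfrac12\int |P_{\bar\tau}^\perp w_x|^2\, ds$, while the remaining quadratic form on the $2p$-dimensional variable $(P_{\bar\tau} w_x, w_y) \in \R^p \times \R^p$ is the standard Heisenberg-type quadratic form associated with oscillator frequency $|\bar\tau| = 1$. Non-negativity on mean-zero $L^2$-controls on $[0, T]$ for $T \le 2\pi$ is then a sharp Wirtinger/Poincar\'e-type inequality.

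The main obstacle is the sharp Wirtinger step: a careful verification that the reduced Heisenberg quadratic form is non-negative on mean-zero controls precisely when $T \le 2\pi$, which is dictated by the first eigenvalue of the associated Jacobi-type oscillator becoming negative only past this threshold. The hypotheses are used minimally: $\bar\tau \neq 0$ fixes the oscillator frequency, while $|\bar\eta| + |P_{\bar\tau}\bar\xi| > 0$ excludes the degenerate Euclidean-line case of Remark~\ref{notflat}, in which $\gamma_{\bar u}$ is globally minimizing and the bound is vacuous. Once $\mathcal{Q}_T[w] \ge 0$ is established for all admissible $w$, global optimality of $\bar u$ for $J$, combined with the Lagrange reformulation, gives $t_\cut \ge 2\pi$ in the normalized setting; undoing the rescaling yields $t_\cut \ge 2\pi/|\bar\tau|$ as claimed.
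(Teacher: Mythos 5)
Your proposal is correct, but it takes a genuinely different route from the paper's. You set up a Lagrange-multiplier (penalized) functional $J$, verify that the given extremal control is a critical point of $J$ under the relaxed constraint (first-layer endpoint fixed, $V_2$-endpoint free), and then invoke the fact that $J$ is a purely \emph{quadratic} functional on an affine constraint set: consequently, nonnegativity of the second variation $\mathcal{Q}_T[w]$ on mean-zero $w$ gives \emph{global} optimality, not merely local optimality. This is crucial — it is what lets a Wirtinger/index-form argument reach a cut-time (rather than conjugate-time) conclusion — and you should flag it explicitly, because a reader used to Riemannian Jacobi-field arguments might worry that nonnegativity of a second variation only gives local minimality. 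Your decomposition of $\mathcal{Q}_T$ is also correct: the pairing $\langle\bar\tau, Q((x,y),(u,v))\rangle = \tfrac12\langle(x,y), A_{\bar\tau}(u,v)\rangle$ depends only on $(P_{\bar\tau} w_x, w_y)$, and on that $2p$-dimensional subspace $A_{\bar\tau}$ acts as a complex structure (since $A_{\bar\tau}^2 = -|\bar\tau|^2 I$ there), so $\mathcal{Q}_T$ splits into a manifestly nonnegative piece plus $p$ decoupled planar Heisenberg index forms. The remaining Wirtinger inequality — for $W\in H^1([0,T],\C)$, $W(0)=W(T)=0$, one has $\int_0^T|\dot W|^2 \geq |\int_0^T \Im(\overline W\dot W)|$ when $T\le 2\pi$ — follows cleanly from a Fourier expansion of the periodic extension ($\sum (2\pi n/T)^2 |c_n|^2 \ge |\sum (2\pi n/T)|c_n|^2|$, which reduces to $n^2 \ge |n|$ at $T = 2\pi$), with equality on the span of $n\in\{0,1\}$ (circular variations). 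This differs from the paper's proof, which follows the geometric-control template of Barilari--Boscain--Gauthier: it poses the auxiliary problem of \emph{maximizing} $\langle\bar\tau, t(\hat s)\rangle$ at fixed arclength and fixed horizontal endpoint, proves existence by weak compactness, applies the Pontryagin maximum principle with the transversality condition to see the optimal control is again an exponential of the form $e^{-sA_{2\nu\bar\tau}}$, and finally pins $\nu=\tfrac12$ via the monotonicity of $\nu\mapsto(\sin(\nu\hat s|\bar\tau|)/(\nu\hat s|\bar\tau|))^2$. Your route is more elementary (no PMP, no compactness) but demands the sharp Wirtinger constant; the paper's route avoids that sharp inequality, at the cost of more optimal-control machinery and an explicit uniqueness argument for the multiplier $\nu$.
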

\begin{proof}
 We work following the argument of~\cite[Section~2.3.2]{BarilariBoscainGauthier12}, which is in turn based on \cite[Chapter~12.4]{AgrachevSachkov04}.  Let $\bar\xi,\bar\eta,\bar\t$ be given and let  \begin{equation}\label{barrato}
(\bar  u(s),\bar v(s))= (u,v)(s,\bar\xi,\bar\eta,\bar\t) =e^{-s A_{\bar\t}}
(\bar \xi, \bar \eta)                                                                                                                                                                                    \end{equation}   and $\bar\gamma(s)=\gamma(s,\bar\xi,\bar\eta,\bar\t)$ be a given extremal path  which we assume to be parametrized by  arclength, i.e.~$|\bar\xi|^2+|\bar\eta|^2=1$. Fix a positive time $
 \hat s<\frac{2\pi}{|\bar\t|}$.  We want to show that $\bar\gamma$ minimizes length between $(0,0,0)$ and $( \bar x, \bar y,\bar t):=\bar \gamma(\hat s) $.

Consider a control $(u,v)\in L^2((0,\hat s), \R^{q\times p}\times\R^p) $ and  the corresponding path $\gamma_{(u,v)} $ as defined in~\eqref{senza}.
Denote $\gamma_{(u,v)}(  s)=( x_{(u,v)}(s),y_{(u,v)}(s), t_{(u,v)} (s)  ) $ for all $s$. On the control $(u,v)$ we require the following  three properties.
 \begin{enumerate}[noitemsep]
  \item[(1)] $|(u,v)  |=1$ a.e.~on $[0, \hat s]$ (i.e.~$\gamma_{(u,v)}$ is arclength).
  \item[(2)] We have $(x(\hat s ), y(\hat s ) )= (\bar x, \bar y) $.
  \item[(3)] $(u,v)$ maximizes the cost  $J(u,v):=\langle \bar\t, t_{(u,v)}(\hat s)  \rangle=:\int_0^{\hat s} \phi((x,y,t),(u,v)) $.

 \end{enumerate}
 We claim first that the three statements (i), (ii), and (iii) of Lemma 21 in \cite{BarilariBoscainGauthier12} hold in our setting too.

 Statement (i) claims that there is $(u,v)\in L^2$ such that (1), (2), and (3) hold. This is a standard compactness argument.   Roughly speaking, it suffices to take a minimizing sequence $\{(u_n, v_n)\}_{n\in\N }\in L^{\infty} ([0, \hat s ] , V_1  ) $. Using (1) and the ODE~\eqref{senza} one can easily check
 that the sequence $ \{ ( x_n, y_n, t_n )\}$ is equicontinuous and uniformly bounded. Passing to a subsequence, we may assume that $(x_n,y_n, t_n)$ converges uniformly to a Lipschitz function $(x,y,t)$  on $[0,\hat s]$. This ensures (2). To check that the limit $(x,y,t)$ satisfies~\eqref{senza}, we use the weak compactness of the sequence $\{  (u_n, v_n) \}$ which by (1) has a subsequence converging to a  limit $(u, v)$ satisfying $|(u(s), v(s))|\leq 1$ for a.~e.~$s\in[0,\hat s]$.
 Observe that the set $\{(u,v)\in V_1:|(u,v)|\leq 1\}$ is convex.

 Statement~(ii) claims that
$\gamma_{(u,v)}$ is a length-minimizer on $[0, \hat s]$. To show this property,  assume by contradiction that there is $\e>0$ and  an arclength control  $( u' , v' )  $ on $[0, \hat s -\e]$ such that $\gamma_{(u',v')}(0)=(0,0,0)$ and $
\gamma_{(u',v')}(\hat s -\e)=\gamma_{(u,v )}(\hat s)=(\bar x,\bar y, t_{(u,v)}(\hat s))$. Since the sub-Riemannian $\e$-ball  is open, we can extend $(u',v')$ on $[\hat s -\e, \hat s]$ to achieve a final point $(\bar x, \bar y, t_{(u',
v')}(\hat s))$ such that $\langle\bar\t, t_{(u',v')}(\hat s)\rangle >  \langle\bar\t, t_{(u,v)}(\hat s)\rangle$ contradicting (3).

Claim (iii) asserts that  the solution $\gamma_{(u,v)}$ discussed in (i) and (ii) has the precise form $\gamma_{(u,v)}= \gamma(\cdot, \xi, \eta, \lambda \bar\t)$ for a suitable $(\xi, \eta )$ of unit norm and $\la>0 $. To accomplish this step, observe that the cost function $\langle\bar\t, t_{(u,v)}\hat s\rangle$  is the integral on $[0,\hat s]$ of
\[
\begin{aligned}
\frac{d}{ds} \langle \bar\t, t\rangle & = \langle \bar \t, \dot t \rangle=\frac 12\langle\bar\t,
xv-uy\rangle
\\&=\frac 12\big(\langle x, \bar\t v^T\rangle-\langle y, u^T\bar \t\rangle\big)
=\frac 12 \langle(x,y), A_{\bar \t}(u,v)\rangle
=: \phi((x,y,t), (u,v)).
\end{aligned}
\]
Therefore, by \cite[Theorem~12.13]{AgrachevSachkov04},
 the Hamiltonian to study problem (1), (2) and   (3) is
\begin{equation*}\begin{aligned}
 H_{(u,v)}((x,y,t), (\xi, \eta,\t))&=
 \sum_{\substack{\a=1,\dots, q \\ k=1,\dots, p}}u_{\a k}\langle  X_{\a k}(x,y,t),(\xi,\eta,\t)\rangle +
 \sum_{j=1,\dots, p} v_j\langle  Y_j(x,y,t),(\xi,\eta,\t)\rangle
\\&
\qquad +2\nu\phi((x,y,t),(u,v))\\&=\langle u,\xi\rangle + \langle v,\eta\rangle -\frac 12\langle u,\t y^T\rangle +\frac 12\langle v, x^T\t\rangle + \nu  \big(\langle x, \bar\t v^T\rangle-\langle y, u^T\bar \t\rangle\big)                                                                                                                                                                                                                             \end{aligned}
\end{equation*}
where we used~\eqref{campetti}.
Since we are maximizing $\int_0^{\wh s}\phi$, we have $\nu\geq 0$. An optimal control for our problem should satisfy the related Hamilton equations for suitable $\nu$. Furthermore it must satisfy the \emph{transversality condition}, with target manifold $N_1:=\{(x,y,t)\in V_1\times V_2: (x, y)= (\bar x, \bar y)\}$. Thus, $(\xi, \eta, \t)(\wh s)\perp N_1$, which becomes $\tau(\wh s)=0$. Since $\dot \t = -\nabla_t H=0$, we have $\tau(s)=0$ on $[0, \hat s]$ for the requested solution.
Therefore (along an optimal control) we can write the Hamiltonian in the form
\begin{equation*}\begin{aligned}
 H_{(u,v)}((x,y,t), (\xi, \eta,\t))& =\langle u,\xi\rangle + \langle v,\eta\rangle   + \nu  \big(\langle x, \bar\t v^T\rangle-\langle y, u^T\bar \t\rangle\big)
 \\&
 =\big\langle(u,v), (\xi,\eta)-\nu A_{\bar\t}(x,y)\big\rangle.
\end{aligned}
\end{equation*}
The remaining Hamilton equations are
\begin{equation}\label{hha}
 \dot \xi =-\nabla_x H=-\nu \bar \t v^T\quad\text{ and }\dot\eta =-\nabla_yH=\nu u^T\bar\t, \quad \text{i.e. } (\dot\xi , \dot\eta)= -\nu A_{\bar\t}(u,v).
\end{equation}
The maximality condition \cite[Theorem 12.13, eq.~(12.31) and Remark 12.2]{AgrachevSachkov04} states that along the  optimal control $(u(s), v(s))$ we have for  almost all  $s\in[0,\hat s]$
\begin{equation*}
\begin{aligned}
 \Big\langle(u(s),v(s)), & (\xi(s),\eta(s) )-\nu A_{\bar\t}(x(s),y(s))\Big\rangle
 \\&=\max_{|(u,v)|=1}
  \Big\langle(u ,v ), (\xi(s),\eta(s) )-\nu A_{\bar\t}(x(s),y(s))\Big\rangle
  \\& =\Big\|(\xi(s),\eta(s) )-\nu A_{\bar\t}(x(s),y(s)) \Big\|=1.
\end{aligned}
\end{equation*}
Therefore, the unit-norm control $(u(s), v(s))$ has the form
$
 (u(s), v(s))=(\xi(s),\eta(s) )-\nu A_{\bar\t}(x(s),y(s)).
$ This shows first that $(u(0), v(0))=(\xi(0),\eta(0))$.  Furthermore, since $(\dot x, \dot  y)=(u,v)$, differentiating the latter formula we get
$ (\dot u ,\dot v )=(\dot\xi ,\dot\eta )-\nu A_{\bar\t}(u , v)=-2\nu A_{\bar\t}(u,v)=- A_{2\nu\bar\t}(u,v),$
where we also used~\eqref{hha}. Thus the solution has the form
\begin{equation}\label{controllo}
(u(s), v(s))= e^{-sA_{2\nu\bar\t}}(\xi,\eta)                                                                                                            \end{equation}
  where $\nu>0$ and $(\xi, \eta)$ is a unit vector. This shows (iii)  and completes the analogous of   \cite[Lemma 21]{BarilariBoscainGauthier12}.

We are left with the proof that the control $(\bar u, \bar v)$ in~\eqref{barrato} and $(u,v)$ in~\eqref{controllo} are the same, i.e.~ that $\nu=\frac 12$. Precisely, we prove the following claim (see again~\cite{BarilariBoscainGauthier12}).

\textbf{Claim.} Let $(\bar u, \bar v) :\R\to\R^{q\times p}\times\R^p$  be
the control in~\eqref{barrato}. Let   $(\xi, \eta)\in V_1$ be of unit norm,  let $\hat s\in \big]0, \frac{2\pi}{|\bar\t|} \big[$ and let $\nu>0$. Let $(u,v)(s)=e^{-sA_{2\nu \bar\t}}(\xi,\eta)$ and put $(x,y)(s)=\int_0^s(u,v)$. Then,
if
 $(x(\hat s), y(\hat s))= (\bar x, \bar y)$ and $\gamma_{(u,v)}$ minimizes length at least until~$\hat s$,
we have $\nu=\frac 12$ and $(\xi,\eta)=(\bar\xi,\bar\eta)$.

To prove the claim   note first that, since $\gamma_{(u,v)}$ minimizes length at least until~$\hat s$ it must be $\hat s\leq   \frac{2\pi}{2\nu|\bar\t|}$, which gives the   upper    bound on $\nu\leq  \frac{\pi}{\hat s|\bar\t|}$.
 Write now   $\xi =P_{\bar\t}\xi +P_{\bar\t}^\perp\xi =\frac{\bar\t}{|\bar\t|}\la^T  +v $ where $v =P_{\bar\t}^\perp\xi $ and $\lambda=\frac{\xi^T\bar\t}{|\bar\t|}$. In view of~\eqref{pari}, assumption $ (x(\hat s), y(\hat s))= (\bar x, \bar y)$ reads
\begin{equation*}
\begin{aligned}
& \hat s T ( \hat s\nu|\bar\t| ) \frac{\bar\t}{|\bar\t|}\Big\{\la^T \cos( \hat s\nu|\bar\t| )
-\eta^T \sin( \hat s\nu|\bar\t| )\Big\}+\hat sP_{ \bar\t }^\perp\xi =\bar x
\\& \hat s T( \hat s\nu|\bar\t| )\Big\{\eta\cos( \hat s\nu|\bar\t| )+\lambda \sin(\hat s\nu|\bar\t|)\Big\}=\bar y\end{aligned}
\end{equation*}
Recall that $\bar\t$ is given. Then we can project the system along $\bar\t^\perp$ and $\bar\t$, obtaining
$P_{\bar\t}^\perp\xi =\frac{1}{\hat s}P_{\bar\t}^\perp \bar x $ and
\begin{equation}\label{cramer}
\left\{\begin{aligned}
& \hat s T ( \hat s\nu|\bar\t| )\Big\{\la \cos( \hat s\nu|\bar\t| )-\eta \sin( \hat s\nu|\bar\t| )\Big\} =\frac{\bar x^T\bar\t}{|\bar\t|}
\\& \hat s T( \hat s\nu|\bar\t| )\Big\{\eta\cos( \hat s\nu|\bar\t| )+\lambda\sin(\hat s
\nu|\bar\t|)\Big\}=\bar y.\end{aligned}\right.
\end{equation}
Taking the norm and summing up we find
$ \hat s^2 T ( \hat s\nu|\bar\t| )^2\big(|\la|^2+|\eta|^2\big)= \frac{|\bar x^T\bar\t|^2}{|\bar\t|^2}
 +|\bar y|^2
$.
Since $|\la|^2+|\eta|^2= |P_{\bar\t}\xi |^2+|\eta|^2=
1-  |P_{\bar\t}^\perp \xi |^2 =
1- \frac{|P_{\bar\t}^\perp \bar x |^2}{\hat s^2}$, we find
\begin{equation}\label{unic}
\hat s^2\Big( \frac{\sin(\nu\hat s|\bar\t|)}{ \nu\hat s|\bar\t| }\Big)^2
\Big(1- \frac{|P_{\bar\t}^\perp \bar x |^2}{\hat s^2}\Big)= \frac{|\bar x^T\bar\t|^2}{|\bar\t|^2}
+|\bar y|^2.
\end{equation}
Since $\hat s , \bar x, \bar y $ and $\bar \t$ are known, the only unknown here   is $\nu \in \left] 0, \pi/
 (\hat s |\bar\t|  )\right]$.    We already know that $\nu=\frac 12$  belongs to that interval and   is a solution of the equation~\eqref{unic}. Since the function $\nu\mapsto ( \sin(\nu\hat s|\bar\t|) /(\nu\hat s|\bar\t| ) )^2$ is strictly
decreasing on $\left] 0, \pi/
 (\hat s |\bar\t|  )\right]$, the solution $\nu=\frac 12$ is unique. Letting then $\nu=\frac 12$ we go to the Cramer system~\eqref{cramer} and we find uniquely $\eta=\bar\eta$ and $\la=\frac{\bar\xi^T\bar\t}{|\bar\t|}$.
The proof is finished.
\end{proof}
 \begin{remark}We give an idea of our choice of the cost~(3) in the proof of Proposition~\ref{superbasso}.
 Our choice  is suggested by the   form of the cost in problem~(P) at page 570  of~\cite{BarilariBoscainGauthier12}, which is actually a control problem in a corank-1 quotient of the general corank-2 group the authors are working with. In our case, to get corank-1 quotients, we can  take  any unit vector $\omega\in\R^{q\times 1}$ and we can consider  the quotient $\G_\omega$ defined as follows.
$\G_\omega =\{(x,y, \omega\omega^T t)\in \G_{qp} : (x,y,t )\in \G_{qp}\}$. As a set, $\G_\omega= V_1\times \Span\{\omega\}$.  It can be equipped  with the operation    $(x,y, \lambda\omega)\cdot(\xi, \eta, \mu\omega)=
\Big(x+\xi, y+\eta, \omega(\lambda+\mu+\frac{1}{2}\omega^T(x \eta-\xi y) )\Big)$. This turns out to be a sub-Riemannian Carnot group of corank-1.  Now, taking the extremal control~$(\bar u, \bar v)$ in~\eqref{barrato} and the corresponding extremal $\bar \gamma(\cdot,\bar\xi,\bar\eta,\bar\t) = (x,y, t) (\cdot, \bar\xi,\bar\eta,\bar\t)$,  if we project  such extremal on a quotient $\G_\omega$, it can be checked that this gives an extremal control in $\G_\omega$ if and only if $\omega =\frac{\bar\t}{|\bar\t|}\in\Span\{\bar\t\}$. In such case, the vertical coordinate in~$\G_\omega=\G_{\bar\t/|\bar\t|}$ is proportional to  the cost appearing in~(3).
\end{remark}
\color{black}

 \section{Description of the cut locus  }\label{cinque}
  In this section we identify precisely the cut-locus and in a significant example we discuss some of its regularity properties.   Some of the results of this section will be used in the proof of Theorem~\ref{taglia} in Section~\ref{cinese}.


  Item \ref{duee} of  Theorem~\ref{cutolo} has been proved in Proposition~\ref{massetto}. Next we prove the remaining ones.
\begin{proof}[Proof of Theorem~\ref{cutolo}, items  \ref{unno},  and \ref{terzi} and~\ref{tres}]
 Let $\gamma(\cdot, \xi, \eta, \t)$ be the extremal appearing in Proposition~\ref{monte}. We know that
 \begin{equation}\label{cinquantuno}
\begin{aligned}
  \Cut(\G_{qp}) &= \Big\{\gamma\Big(\frac{2\pi}{|\t|}, \xi, \eta, \t\Big):(\xi, \eta, \t)\in
  V_1\times V_2,\; \tau\neq 0\text{  and }|\eta|^2+ |P_\t\xi |^2>0\Big\}
  \\ & =\{\gamma(2\pi,\xi,\eta,\t):(\xi, \eta, \t)\in  V_1\times V_2,
  \;  \;   |\tau|=1
  \text{  and }|\eta|^2+ |P_\t\xi |^2>0\},
\end{aligned}
 \end{equation}
 by~\eqref{benny}.

 Step 1.
 We show first that the set~\eqref{cinquantuno} is contained in the set~\eqref{etichetta}. By~\eqref{ovest}, a point $(x,0,t) $
in~\eqref{cinquantuno} has the form
 \begin{equation*}
\left\{\begin{aligned}
  x &=2\pi \,P_\t^\perp\xi
  \\
  t&=\pi\Big(|\eta|^2+ |P_\t\xi |^2\Big)\t -2\pi    P_\t^\perp\xi \,\xi^T\t.
\end{aligned} \right.
 \end{equation*}
The first line tells immediately that $\t\perp\Im x$. Furthermore, it must be
 $|\eta|^2+ |P_\t\xi |^2>   0$, see~\eqref{cinquantuno}.
Eliminating $P_\t^\perp\xi $ we get
$
 t=\pi\Big(|\eta|^2+ |P_\t\xi |^2\Big)\t - x\,\xi^T\t$. Thus
\begin{equation*}
\pi\Big( |\eta|^2+ |P_\t\xi |^2 \Big)\t=P_{\Im x}^\perp t\quad\text{and}\quad -x\,\xi^T\t =P_{\Im x} t.
\end{equation*}
Letting $\b=\xi^T \t \in\R^{p\times 1} $,  we get
$
\pi\Big( |\eta|^2+|\b|^2 \Big)\t=P_{\Im x}^\perp t$ and $-x\b =P_{\Im x} t
$.
From last formula we get immediately
\begin{equation}\label{quadro}
\begin{aligned}
 |P_{\Im x}^\perp t|  = \pi (|\eta|^2+|\b|^2) \geq \pi|\b|^2 &\geq   \pi\min\{|\b'|^2:-x\b'=P_{\Im x}  t\}
 \\&=: \pi |x^\dag P_{\Im x}  t|^2= \pi|x^\dag t|^2.
\end{aligned}
\end{equation}
The inclusion is proved.

 Step 2. Now we show the opposite inclusion and we characterize which  cut points are reached by a unique minimizer.

Let $(x,0,t) $ be in the set in the right-hand side of~\eqref{etichetta}. We must find $(\xi, \eta)\in V_1 $ and $\t\in V_2 $   such that
\begin{equation}\label{ventisei}
\left\{\begin{aligned}
 & 2\pi\,P_\t^\perp   \xi= x
 \\&
\pi\Big(|\eta|^2+ |P_\t \xi |^2\Big)\t-2\pi  P_\t^\perp \xi \,\xi^T\t =t
\\&
  |\t|=1
  \quad\text{ and }\quad
  |\eta|^2+ |P_\t\xi |^2>0.
\end{aligned}\right.
\end{equation}
Since $\t$ is a unit vector and from the  first line, we can write
$\xi = P_\t\xi +P_\t^\perp\xi =\t\t^T\xi+ \frac{x}{2\pi}=:-\t\la^T+\frac{x}{2\pi}$, where we put~$\lambda=-\xi^T\t\in\R^{p\times 1}$.
Thus, to find $\xi$, it suffices to know the vector
$\la=-\xi^T\t\in \R^p$. Concerning the vector  $\t$ we are looking for, it must be orthogonal to $\Im x$.
Then, projecting the second line of~\eqref{ventisei} along $(\Im x)^\perp$, we get
\begin{equation}\label{massimino}
 \pi\Big(|\eta|^2+ |P_\t \xi |^2\Big)\t=P_{\Im x}^\perp t\neq 0.
\end{equation}
Note that $P_{\Im x}^\perp t\neq 0$ by assumption. Therefore, since $\t$ is unit-norm, it must be first $  |\eta|^2+ |P_\t\xi |^2>0$ and furthermore  $\t =\frac{P_{\Im x}^\perp t}{|P_{\Im x}^\perp t|}
$.
Next project the second line of~\eqref{ventisei} along $\Im x$. We get the equation $x\la=P_{\Im x}t $. If   the columns $x_1, \dots, x_p$ of $x$ are independent, then $x^Tx $ is nonsingular and we find  a unique
$\lambda =(x^T  x)^{-1}x^T t=x^\dag t$ solving the problem. If instead
$x_1,\dots, x_p$ are dependent, the solutions $\la\in\R^p$ of $x\la = P_{\Im x}t$ form an affine space of the form $\{x^\dag t+\mu: \mu\in\ker x\}$, where $x^\dag t$ satisfies $xx^\dag t=P_{\Im x}t$ and  has the further property $x^\dag t\perp\ker x$, i.e. $x^\dag t$ it is the minimal-norm solution:
 $|x^\dag t|\leq |x^\dag t+\mu|$ for all $ \mu\in\ker x$.
More precisely, since $x^\dag t\perp \mu$, we have  $|x^\dag t+\mu|^2=|x^\dag t|^2+|\mu|^2$.
Fix now any $\mu\in \ker x$ and choose then $\lambda = x^\dag t+\mu$. Multiply  the second line of~\eqref{ventisei} scalarly by $\t$ (taking into account that $-\xi^T\t=\la=x^\dag t +\mu$). This gives
\begin{equation*}
 \pi(|\eta|^2+|x^\dag t+\mu|^2)=\langle t,\t\rangle =\Big\langle t,\frac{ P_{\Im x}^\perp t}{|
 P_{\Im x}^\perp
 t|}
 \Big\rangle=|P_{\Im x}^\perp t|.
\end{equation*}
Again by  orthogonality
$\mu \perp x^\dag t $
we get
\begin{equation}\label{ultima}
 \pi(|\eta|^2+|\mu|^2)=|P_{\Im x}^\perp t|-\pi|x^\dag t|^2.
\end{equation}
The right-hand side is nonnegative by assumption. Therefore, any choice of $\eta\in\R^p$ and $\mu\in\ker x$ such that~\eqref{ultima} is fulfilled, will provide a solution fulfilling~\eqref{ventisei}. This finishes the proof of the inclusion.

To prove item~\ref{terzi} of Theorem~\ref{cutolo},  observe that, given $(x,0,t)\in\Cut(\G_{qp})$, uniqueness of the choice of $(\xi, \eta, \t)$ holds if and only if the choice of $\eta$ and $\mu\in\R^p$
satisfying~\eqref{ultima} is unique. This happens if and only if the right-hand side of~\eqref{ultima} vanishes, i.e. $|P_{\Im x}^\perp t|=\pi|x^\dag t|^2$.

Step 3.
We finally check  formula~\eqref{pece}.
Let  $(x,0,t)=\gamma(2\pi, \xi, \eta, \t)\in \Cut(\G_{qp})$, where $|\t|=1$.
The   horizontal speed of the minimizer
$\gamma(\cdot , \xi, \eta, \t):[0, 2\pi]\mapsto V_1\times V_2$ is
$\sqrt{|\eta|^2+|\xi|^2}$ .
We have
$ |\eta|^2+|\xi|^2=|\eta|^2+ |P_\t \xi |^2+ |P_\t^\perp\xi |^2
$.
From the first line of~\eqref{ventisei} we have $ |P_\t^\perp\xi |^2= \frac{|x |^2}{4\pi^2} $.
From~\eqref{massimino} we find
$
  |\eta|^2+ |P_\t \xi |^2  =\frac{|P_{\Im x}^\perp t|}{\pi}
$.
Collecting formulas, we conclude that
\begin{equation*}
 d(x,0,t)^2=4\pi^2(|\xi|^2+|\eta|^2)=|x|^2+4\pi|P_{\Im x}^\perp t|,
\end{equation*}
as required. Note that  the component $ P_{\Im x} t$  does not appear in the distance.
\end{proof}

\begin{remark}\label{insiemedenso}
 Observe that the set of   points $(x,0,t)\in\Cut(\G_{qp})$ reached by more than one unit-speed length-minimizer is dense in $\Cut(\G_{qp})$. To see that, it suffices to take a point $(x,0,t)=(x,0, P_{\Im x}t+
 P_{\Im x}^\perp t) $ such that  $P_{\Im x}^\perp t\neq 0$ and satisfying  $|P_{\Im x}^\perp t|=\pi|x^\dag t|^2$. This point can be reached by a unique unit-speed minimizer. Next consider the family of approximating points $
  (x_\e, 0, t_\e) := (x, 0, P_{\Im x }t+(1+\e)P_{\Im x}^\perp t)$,
where $\e>0$.
We have easily $|P_{\Im x_\e}^\perp t_\e|=(1+\e)|P_{\Im x }^\perp t|$.
Furthermore, note that $x_\e^\dag t_\e$ only depends on $P_{\Im x_\e} t_\e =P_{\Im x}t$ and not on $P_{\Im x_\e}^\perp t_\e $ which changes with $\e$.
Thus
 $ x_\e^\dag t_\e =x^\dag t$ for all $\e>0$.
Thus the approximating point satisfies the strict inequality $|P_{\Im x_\e}t_\e|>\pi|x_\e^\dag t_\e|^2 $. Consequently, it can be reached by more than one length-minimizer.
\end{remark}

\begin{remark}\label{spiego}
As a byproduct of Step 2 of the proof above, and for future reference, observe that, if $(x,0,t)\in\Cut(\G_{qp})$
and if $\ker x$ is trivial, then we can write $(x,0,t)=\gamma(2\pi ,\xi,\eta,\t)$, where $\t=\frac{P_{\Im x}^\perp t}{|P_{\Im x}^\perp t|}$, $\xi=\t(x^\dag t)^T+\frac{x}{2\pi}$ and $\eta$ satisfies
\begin{equation}
 \label{spiegazione}
 \pi|\eta|^2=|P_{\Im x}^\perp t|-\pi|x^\dag t|^2.
\end{equation}

\end{remark}

\color{black}

\subsection{The group \texorpdfstring{$\G_{q1}$}{Gq1}} \label{zoppo}
Here $(x,0,t)\in\R^q\times\R\times\R^q$. All points of the form $(0,0,t)\in\G_{q1}$ with $t\neq 0$ belong to $\Cut(\G_{q1})  $. Let $(x,0,t)\in \G_{q1}$ be such that $x\neq 0$ and $t\neq 0$. The equation $x\beta=P_{\Im x} t$ has a unique real solution   $x^\dag (P_{\Im x} t)= \frac{\langle x,t\rangle}{|x|^2}$.
  Then
$|x^\dag t|^2 =|x^\dag P_{\Im x}t|^2=\frac{\langle x,t\rangle^2}{|x|^4}=\frac{|  P_{\Im x}t|^2}{|x|^2}$.
Thus we have
\[
\begin{aligned}
  \Cut(\G_{q1})&= \Big\{(x,0,t)\in\G_{q1}:t\neq 0\;\text{and}\; |x|^2\;|P_{\Im x}^\perp t|\geq \pi  |P_{\Im x}t|^2 \Big\}
 \\&=\{(x,0,t): t\neq 0 \;\,{and}\;\,  |x|^8|t|^2-|x|^6\langle t,x\rangle^2\geq \pi^2\langle t,x\rangle^4\},                                                                                                                        \end{aligned}
\] where, passing from the first to the second line, we used the Pythagorean theorem $|P_{\Im x}^\perp t|^2=
|t|^2- | P_{\Im x}t|^2= |t|^2-\frac{\langle t,x\rangle^2}{|x|^2}$.

In the next proposition, we analyze the regularity of the set of cut points where equality holds. Namely, of  \begin{equation}\label{sigm}
 \Sigma:=\{(x,0,t): t\neq 0 \;\,{and}\;\,  |x|^8|t|^2-|x|^6\langle t,x\rangle^2=\pi^2\langle t,x\rangle^4\}.
\end{equation}
\begin{proposition}\label{ciliegia}  Let $\Sigma \subset \Cut(\G_{q1})$ be the set in~\eqref{sigm}. Let   $\Sigma_0:=\{(x,0,t)\in\Sigma :x\neq 0\}\subset\Sigma$. Then $\Sigma_0$ is a smooth codimension-one  embedded  submanifold of $\{(x,0,t): (x, t)\in \R^q\times\R^q\}$.
Furthermore,  the whole $\Sigma$ is not a manifold.
 \end{proposition}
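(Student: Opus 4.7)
The plan is to work in the ambient slice $\{(x,0,t):x,t\in\R^q\}$, identified with $\R^q\times\R^q$. Setting
\[
F(x,t):=|x|^8|t|^2-|x|^6\langle t,x\rangle^2-\pi^2\langle t,x\rangle^4,
\]
one has $\Sigma=\{F=0\}\cap\{t\neq 0\}$ and $\Sigma_0=\Sigma\cap\{x\neq 0\}$. The first claim will follow from a regular-value argument; the second from the existence of two tangent sheets of $\Sigma_0$ meeting along the $q$-dimensional branch $\{x=0,\,t\neq 0\}$.

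For the first claim, I would show that $\nabla F\neq 0$ on $\Sigma_0$. A direct computation gives
\[
\nabla_t F(x,t)=2|x|^8 t-2\bigl(|x|^6\langle t,x\rangle+2\pi^2\langle t,x\rangle^3\bigr)x.
\]
If $\nabla_t F=0$ at $(x,t)\in\Sigma_0$, then $t$ is a scalar multiple of $x$, say $t=cx$; substituting into $F=0$ yields $-\pi^2c^4|x|^8=0$, forcing $c=0$ and hence $t=0$, contradicting $t\neq 0$. Therefore $\nabla F\neq 0$ on $\Sigma_0$, and by the regular value theorem $\Sigma_0$ is a smooth codimension-one embedded submanifold of the open set $\{x\neq 0,\,t\neq 0\}$.

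For the second claim, I fix a point $(0,0,t_0)\in\Sigma$ with $t_0\neq 0$ and, after choosing coordinates so that $t_0=e_q$, produce two distinct small sequences in $\Sigma_0$ converging to it with opposite signs of $x_q$. Concretely, seeking $(x,t_0)\in\Sigma_0$ with $\tilde x:=(x_1,\dots,x_{q-1})=\varepsilon e_1$ and $x_q=a$, the equation $F(x,t_0)=0$ collapses to $\varepsilon^2(\varepsilon^2+a^2)^3=\pi^2 a^4$, whose small real solutions are $a_\pm(\varepsilon)=\pm\varepsilon^2/\sqrt\pi+O(\varepsilon^4)$. Now suppose for contradiction that $\Sigma$ is a smooth manifold near $(0,0,t_0)$: since $\Sigma_0$ is open in $\Sigma$ and is smooth of dimension $2q-1$, $\Sigma$ has dimension $2q-1$ as well. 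The Taylor expansion of $F$ at $(0,t_0)$ in the variables $(x,t-t_0)$ has no terms of total degree $<4$, and its unique degree-four homogeneous part is $-\pi^2\langle t_0,x\rangle^4$ (the other two summands being of degree $\geq 8$ in $x$); hence the tangent cone of $\{F=0\}$ at $(0,t_0)$ is contained in the $(2q-1)$-dimensional hyperplane $\{u_q=0\}$, and the tangent space of $\Sigma$ at $(0,0,t_0)$ must equal this hyperplane. By the implicit function theorem, $\Sigma$ would then be locally the graph of a single smooth function $x_q=\phi(\tilde x,t)$ with $\phi(0,t_0)=0$ and $d\phi(0,t_0)=0$; but the two sequences above would force $\phi(\varepsilon e_1,t_0)=\pm\varepsilon^2/\sqrt\pi+O(\varepsilon^4)$ simultaneously, a contradiction.

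The main difficulty I anticipate is pinning down the tangent cone at $(0,t_0)$: one must verify that the degree-four term $-\pi^2\langle t_0,x\rangle^4$ is really the lowest-order homogeneous part of $F$ in the variables $(x,t-t_0)$, forcing the tangent space of $\Sigma$ (assumed to be a manifold) to coincide with the entire hyperplane $\{u_q=0\}$. Once this is settled, the explicit two-sheet structure along $\{t=t_0\}$ defeats single-valuedness and closes the proof.
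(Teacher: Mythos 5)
Your proof is correct and follows essentially the same two-step strategy as the paper: a regular-value argument on $\Sigma_0$ (you use the polynomial $F=|x|^8\psi$ and observe $\nabla_t F\neq 0$, which is a mild simplification of the paper's $ac-b^2$ computation for $\psi$), and then a contradiction at $(0,0,t_0)$ via the tangent-cone constraint $\langle t_0,x\rangle=0$ followed by the failure of a single-valued graph representation (you exhibit the two branches $a_\pm(\varepsilon)$ explicitly, where the paper instead uses evenness of $\psi$ in $F$ to force $F\equiv 0$; these are the same phenomenon). Two minor remarks: your hyperplane should be written $\{x_q=0\}$ rather than $\{u_q=0\}$, and your treatment also cleanly avoids the paper's loosely stated ``without loss of generality $q=2$'' reduction by carrying the general $q$ throughout.
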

Observe that $\Sigma_0$ has codimension $2$ in $\G_{q1}$. The surface $\Sigma_0$ contains all points $(x,0,t)\in\Cut(\G_{qp})$ which are reached by a unique length-  minimizing unit-speed curve.

\begin{proof}To check this statement, it suffices to observe that $\Sigma_0$ is the zero-level set of the function $\psi:(\R^q\setminus\{0\})\times (\R^q\setminus\{0\})\to \R$
 \begin{equation}\label{piessei}
 \psi(x,t)=|t|^2-\frac{\langle x,t\rangle^2}{|x|^2} -\pi^2
 \frac{\langle x,t\rangle^4}{|x|^8}.                                    \end{equation} A short computation gives
\begin{equation*}
 \left\{
 \begin{aligned}
 &\nabla_t \psi = 2t -\frac{2}{|x|^2}\langle x,t\rangle
\Big\{1+2\pi^2\frac{\langle t, x\rangle^2}{|x|^6}\Big\}x
\\&
\nabla_x\psi =
-\frac{2}{|x|^2}\langle x,t\rangle
\Big\{1+2\pi^2\frac{\langle t, x\rangle^2}{|x|^6}\Big\}t+
\frac{2}{|x|^4}\langle x,t\rangle^2
\Big\{1+\frac{4\pi^2}{|x|^6}\langle x,t\rangle^2\Big\}x.\end{aligned}
\right.
\end{equation*}
Observe now that  at all point of $\Sigma_0$, we have $\langle x,t\rangle\neq 0$ (see~\eqref{piessei}). Furthermore, $x$ and $t$ are independent (otherwise, again~\eqref{piessei} fails).  Denoting $\nabla_t \psi=:a t+bx$ and $\nabla_x \psi=: bt+cx$,  we see that the Jacobian of $\psi$  has full rank  if and only if $ac-b^2\neq 0$.
A computation gives
\begin{equation*}
 ac-b^2=-\frac{16\pi^4}{|x|^{16}}\langle x,t\rangle^6\neq 0
\end{equation*}
for any $(x,t)\in\Sigma_0$. Thus, $\Sigma_0$ is a smooth manifold.

To show that  the whole $\Sigma$ is not a manifold, we argue by contradiction. Without loss of generality, consider $q=2$ and take the point $(x,t)=(0, e_2)=(0,0,0,1)$. Assume that $\Sigma$ is a smooth  embedded  manifold in a neighborhood of $(0, e_2)$.

\textit{Step 1.} If $\Sigma$ is a smooth surface containing, $(0, e_2)$, a tangent vector $(\xi, \t)\in T_{(0, e_2)}\Sigma $, would have the form $(\xi, \t)=(x'(0), t'(0))$, where $s\mapsto (x(s), t(s))$
is a curve belonging to $\Sigma$ for $s\in(-1,1)$, $(x(0), t(0))= (0, e_2)$ and $(x'(0), t'(0))=(\xi,\t)$. We claim that all such vectors satisfy $\xi\perp e_2$. This implies that  $T_{(0, e_2)}\Sigma = \{(\xi_1, 0, \t_1,\t_2): \xi_1, \t_1,\t_2\in\R\}$. To prove the claim, take a curve $(x(s), t(s))$ as above
and expand at the first order $x(s)= x' s+o(s)$ and $t(s)= e_2+t's+o(s)=e_2+o(1)$, where $(x', t')=(x'(0), t'(0))$, $o(1)\to 0$ and $\frac{o(s)}{s}\to 0$ as $s\to 0$. Then $|x(s)|^2= |x'|^2 s^2+o(s^2)$, $|t(s)|^2 = 1+o(1)$, while $\langle x(s), t(s)\rangle =\langle x', e_2\rangle s +o(s)= x'_2 s+o(s)$.
Inserting into~\eqref{sigm}, we get
\begin{equation*}
 (|x'|^8s^8 + o(s^8))(1+o(1))-(|x'|^6 s^6 +o(s^6))((x'_2)^2 s^2+o(s^2))-
 \pi^2((x_2')^4 s^4+o(s^4))=0.
\end{equation*}
Comparing powers of $s$, we get  $x_2' =\langle x'(0), e_2\rangle =0$, as desired.

\textit{Step 2.} Being $T_{(0, e_2)}\Sigma =\{(\xi_1, 0, \t_1,\t_2):\xi_1,\t_1,\t_2\in\R\}$,  the manifold $\Sigma $ can be written as a   graph of a function $F $ of the variables $x_1, t_1, t_2$. Namely, there are neighborhoods  $V:=\{(x_1, t_1, t_2)\in\R^3:
| (x_1, t_1, t_2)-(0,0,1)|<\e\} $ and $W=\{x_2\in\R:|x_2|<\delta\}$ such that  we have
\[
(V\times W)\cap \Sigma  =  \{(x_1, F(x_1, t_1, t_2), t_1, t_2):  (x_1, t_1, t_2)\in V\}.
\]
 Precisely, for all $(x_1, t_1, t_2)\in V$ there is a unique $F=F(x_1, t_1, t_2)\in\left]-\d,\d\right[$ such that $\psi(x_1, F, t_1, t_2)=0$. Let us test such property on points of the form $(x_1, 0, t_2)\in V$.    Letting $F=F(x_1, 0, t_2)$, we have after some simpifications
 \begin{equation}\label{q}
\psi(x_1, F, 0, t_2)=t_2^2 -\frac{(t_2 F)^2}{x_1^2+F^2}-\pi^2\frac{(t_2 F)^4}{(x_1^2+F^2)^4}=0.
 \end{equation}
Since $F$ appears quadratically, its uniqueness  gives $F(x_1, 0, t_2)=0$ as soon as $x_1  t_2\neq 0$. However, inserting $F=0$ into~\eqref{q}, we get $  t_2^2=0$ for all $(x_1, 0, t_2)\in V$, which gives a contradiction.
\end{proof}

\section{Lower estimate of conjugate time for \texorpdfstring{$p=1$}{p=1}}
 \label{cinese}
In this section we show that for $q\geq 2$ and $p=1$, in the model $\G_{q1}$ there are minimizers such that the $t_\cut$ is strictly smaller than the first conjugate time. The following statement completes Proposition~\ref{piuno}. Such phenomenon has already been encountered in~\cite{BarilariBoscainGauthier12}.
\begin{theorem}[Conjugate points, $q\geq 2$, $p=1$, necessary condition]
\label{taglia}
Let $q\geq 2$ and  consider the model $\G_{q1}$.
 Given $(\xi, \eta, \t)\in V_1\times V_2$, assume that $\t\neq 0$ and $|\eta|^2+|P_\t\xi|^2>0$ and consider  the extremal $\gamma(\cdot, \xi ,\eta, \t)$.
 Then, if $\bar s:= \frac{2\pi}{|\t|}$ is a conjugate time of $\gamma(\cdot, \xi,\eta,\t)$, it must be
 \begin{equation}\label{noncomprendo}
  \eta P_\t^\perp\xi =0.
 \end{equation}
\end{theorem}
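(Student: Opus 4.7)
The plan is to prove the contrapositive: under the hypothesis $\bar\eta\neq 0$ and $P_{\bar\t}^\perp\bar\xi\neq 0$, the differential of $(\xi,\eta,\t)\mapsto\gamma(\bar s,\xi,\eta,\t)$ is nonsingular at $(\bar\xi,\bar\eta,\bar\t)$. First I would use the homogeneity~\eqref{benny} to normalize $|\bar\t|=1$, so $\bar s=2\pi$. Then I would choose an orthonormal basis of $\R^q$ with $\bar\t=e_1$ and write $\xi=(\xi_1,\xi_\perp)$, $\t=(\t_1,\t_\perp)$ along the splitting $\Span\{\bar\t\}\oplus\bar\t^\perp$, setting $a:=\bar\xi_1$ and $v:=\bar\xi_\perp\in\R^{q-1}$, so that $v\neq 0$.

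The observation that drives the computation is that at $\t=\bar\t$ the $y$-component vanishes identically in $(\xi,\eta)$: from~\eqref{pari} we have $y(2\pi,\xi,\eta,\t)=2\pi T(\pi|\t|)\{\eta\cos(\pi|\t|)+\tfrac{\langle\xi,\t\rangle}{|\t|}\sin(\pi|\t|)\}$, and $T(\pi)=0$. A short Taylor expansion using $T'(\pi)=-1/\pi$ shows that at the base point $\p_{\t_1}y=2\pi\bar\eta$ while all other partials of $y$ vanish. Thus expanding the Jacobian determinant along the $y$-row reduces the problem, up to the nonzero factor $\pm 2\pi\bar\eta$, to showing that the $2q\times 2q$ minor obtained by removing the $y$-row and the $\t_1$-column is invertible.

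Next I would compute the remaining blocks from~\eqref{pari} using also $U(\pi)=\tfrac{1}{4\pi}$, $V(\pi)=\tfrac{1}{2\pi}$, $U'(\pi)=-\tfrac{1}{2\pi^2}$, $V'(\pi)=-\tfrac{1}{\pi^2}$. The $x_\perp$-row turns out to contain only two nonzero blocks, $\p_{\xi_\perp}x_\perp=2\pi I_{q-1}$ and $\p_{\t_\perp}x_\perp=-2\pi a I_{q-1}$. Performing the column operation (column $\t_\perp$)$\,\leftarrow\,$(column $\t_\perp$)$+a\cdot$(column $\xi_\perp$) kills the $x_\perp/\t_\perp$ block, and thanks to the cancellation $\pi(3a^2+\bar\eta^2)-2\pi a^2=\pi(a^2+\bar\eta^2)$ the $t_\perp/\t_\perp$ block simultaneously simplifies to $\pi(a^2+\bar\eta^2)I_{q-1}-2\pi vv^T$. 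Expanding along the $q-1$ rows $x_\perp$, each now having a single nonzero entry $2\pi$ in a distinct $\xi_\perp$-column, extracts the factor $(2\pi)^{q-1}$ and leaves a $(q+1)\times(q+1)$ matrix with rows indexed by $(x_1,t_1,t_\perp)$ and columns by $(\xi_1,\eta,\t_\perp)$.

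Finally, choosing the basis of $\bar\t^\perp$ so that $v=b e_1'$ with $b:=|v|>0$, the $t_\perp/\t_\perp$ block becomes diagonal, namely $\diag(\pi(a^2+\bar\eta^2)-2\pi b^2,\pi(a^2+\bar\eta^2),\ldots,\pi(a^2+\bar\eta^2))$, allowing one to factor out $[\pi(a^2+\bar\eta^2)]^{q-2}$ (nonzero because $a^2+\bar\eta^2>0$) and reduce the problem to the single $3\times 3$ determinant
\[
\det\begin{pmatrix}0 & 0 & -2\pi b\\ 2\pi a & 2\pi\bar\eta & 4\pi ab\\ -2\pi b & 0 & \pi(a^2+\bar\eta^2)-2\pi b^2\end{pmatrix}=-8\pi^3\bar\eta\,b^2\neq 0.
\]
The main obstacle will be the accurate computation of $\p_\t t$ at the base point: both terms in the formula for $t$ in~\eqref{pari} contribute comparable pieces, and the delicate cancellation that turns $\pi(3a^2+\bar\eta^2)$ into $\pi(a^2+\bar\eta^2)$ in the $t_\perp/\t_\perp$ block is what ultimately allows the final $3\times 3$ determinant to appear in this clean form.
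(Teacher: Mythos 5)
Your proof is correct, and it takes a genuinely different route than the paper's.

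The paper works on the cylinder $\Lambda=\{|\xi'|^2+\eta'^2=|\bar\xi|^2+\bar\eta^2\}$, replacing the radial direction in the covector space by the time derivative $\partial_s\gamma$. It builds an ad hoc frame $Z,V_1,\dots,V_{q-1}$ tangent to $\Lambda$, invokes the Liouville-form identity $\ell(Z\gamma)=\ell(V_k\gamma)=0$, $\ell(\partial_s\gamma)=1$ (Lemma~\ref{liouville} in the Appendix) to collapse the $(2q+1)\times(2q+1)$ determinant immediately, and then shows that a reduced $(2q-1)\times(2q-1)$ matrix $\widehat M$ has trivial kernel via a case analysis on $q=2$ vs.\ $q\geq 3$ and $\langle\xi,\tau\rangle=0$ vs.\ $\neq 0$. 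Your proof instead attacks the full Jacobian $d_{(\xi,\eta,\tau)}\gamma(2\pi,\cdot)$ head-on in coordinates adapted to $\bar\tau$ and $P_{\bar\tau}^\perp\bar\xi$, exploits $T(\pi)=0$ to make the $y$-row have a single nonzero entry $2\pi\bar\eta$ in the $\tau_1$-column, and then does a column operation that simultaneously kills the $x_\perp/\tau_\perp$ block and produces the cancellation $\pi(3a^2+\bar\eta^2)-2\pi a^2=\pi(a^2+\bar\eta^2)$ in the $t_\perp/\tau_\perp$ block, ending with a single $3\times 3$ determinant $-8\pi^3\bar\eta\,b^2\neq 0$. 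I have verified the block entries and the final determinant; the numerical values $T'(\pi)=-1/\pi$, $U(\pi)=1/(4\pi)$, $V(\pi)=1/(2\pi)$, $U'(\pi)=-1/(2\pi^2)$, $V'(\pi)=-1/\pi^2$ are all correct, as is the normalization $|\bar\tau|=1$ via \eqref{benny}. What your approach buys: it is self-contained and does not need the Liouville lemma, it avoids the case split on $\langle\xi,\tau\rangle$ and on $q=2$ vs.\ $q\geq 3$, and the final answer drops out as a single explicit scalar. What the paper's approach buys: the Liouville-form reduction is a structural fact that applies more generally, and working on the cylinder is the natural setting when one wants to compare with the first conjugate time in arclength parametrization. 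One cosmetic remark: you should state explicitly that the normalization $|\bar\tau|=1$ is legitimate because $\gamma(2\pi/|\bar\tau|,\xi,\eta,\tau)=\gamma\big(2\pi,\tfrac{\xi}{|\bar\tau|},\tfrac{\eta}{|\bar\tau|},\tfrac{\tau}{|\bar\tau|}\big)$ by \eqref{benny}, so the two exponential maps differ by a linear diffeomorphism of the covector space, and that the scaling preserves the nonvanishing of $\eta P_\tau^\perp\xi$.
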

We do not discuss the case $p=q=1$, the lower dimensional Heisenberg group, where it is known that cut time and first conjugate times are always the same.
\begin{proof} Consider
  $\G_{q1}$  and a point $(\xi,\eta,\t)\in T^*_{(0,0,0)}\G_{q1}$. Define $\Lambda =\{(\xi' , \eta',\tau')\in T^*_{(0,0,0)}\G_{q1}:|\xi'|^2+{\eta'}^2=|\xi|^2+\eta^2 \}\subset T^*_{(0,0,0)}\G_{q1}$.
  It suffices to show that $\eta|P_\t^\perp\xi|\neq 0$ implies that $s=\frac{2\pi}{|\t|}$ is not conjugate.
  Fix an orthogonal frame in $T_0^* \G_{q1}\simeq\R^q$ choosing
\begin{equation}\label{duale}
\t ,\quad   \omega_{q-1}:=P_\t^\perp\xi,\quad\text{and}\quad \omega_1, \dots, \omega_{q-2}\in\Span\{\t, P_\t^\perp\xi\}^\perp.  \end{equation}
  Note that if $q=2$, the frame will be formed by $\omega_1=P_\t^\perp\xi$ and $\t$ only and the discussion will be easier (see below).
Fix also the  frame of vector fields \[ Z=-\langle\xi,\tau\rangle\p_\eta+\eta D_{\t} \qquad V_k=
-\langle\xi,\omega_k\rangle\p_\eta+\eta D_{\omega_k}\quad\text{where}\quad  k\leq q-1
\] and $D_{\omega_k}:=\langle\omega_k, \nabla_\xi\rangle$ and $D_\t=\langle\t,\nabla_\xi\rangle$. Observe that the frame is tangent to~$\Lambda$. \footnote{     This comes from the fact the set $\Lambda$ is  defined by the equation
$F(\xi, \eta,\t):=|\xi|^2+ \eta^2 =$ constant, and we have $ZF=V_kF=0$ identically.  }
Furthermore, the frame in independent,  because $\eta\neq 0$ and $\omega_1,\dots, \omega_{q-1}, \t$ are independent. We need to show that the following determinant is nonzero.
\begin{equation*}
\det M : =   dx_1\wedge\cdots\wedge dx_q\wedge dy\wedge dt_1\wedge\cdots\wedge dt_q(\p_s\gamma, Z\gamma, V_1\gamma, \dots, V_{q-1}\gamma,d_\t\gamma)\neq 0
\end{equation*}
Let $\ell:= \xi\cdot dx +\eta dy+\t\cdot dt$ be the Liouville form. Since $|\t|\neq 0$, up to a nonzero factor we change  $dx\wedge dy\wedge dt_1\wedge\cdot\wedge d t_q$ with $dx\wedge dy\wedge \ell\wedge \mu_1\wedge\cdots\wedge \mu_{q-1}$,   where $\mu_k:=\omega_k\cdot dt $.
By Lemma~\ref{liouville}, we have $\ell(Z\gamma)=\ell(V_k\gamma)=0$,  for all $k=1,\dots, q-1$, while $\ell(\p_s\gamma)=1$.
\begin{equation*}
\begin{aligned}
\det M & \sim  \ell(\p_s\gamma)(dx\wedge dy\wedge \mu_1\wedge\cdots\wedge\mu_{q-1}) (Z\gamma,W\gamma, V_{q-1}\gamma, d_\t\gamma)
 \\&
  \sim\det \begin{bmatrix}
Zx &W x & V_{q-1}   x &d_\t x
\\
Zy & Wy & V_{q-1}y& d_\t y
\\
\mu_1 (Zt) & \mu_1 (Wt) & \mu_1 (V_{q-1}t) & \mu_1 (d_\t t)
  \\\vdots &\vdots &\vdots
 \\
  \mu_{q-1} (Zt) & \mu_{q-1} (W t) &   \mu_{q-1} (V_{q-1} t) &\mu_{q-1}  (d_\t t)
  \end{bmatrix}.
\end{aligned}
\end{equation*}
where $\sim$ means that the left-hand side vanishes if and only if the right-hand side vanishes. Here $x=[x_1,\dots, x_q]^T
$, while   $W$ stands for $[V_1,\dots, V_{q-2}]$ and $d_\t=[\p_{\t_1},\dots, \p_{\t_q}]$.

We already calculated the $Z$-derivatives in~\eqref{bottiglia} and~\eqref{bottiglia2}. Namely  $Zx(\frac{2\pi}{|\t|},\xi,\eta,\t)=0  $,  $Zy(\frac{2\pi}{|\t|},\xi,\eta,\t)=0$ and  $Z t(\frac{2\pi}{|\t|},\xi,\eta,\t)=-\frac{2\pi\eta}{|\t|}P_\t^\perp\xi$. By~\eqref{duale} we have then
$\mu_j (Zt)=0$, for $j\leq q-2$ and $\mu_{q-1}(Zt)=(P_\t^\perp\xi)^T
(-\frac{2\pi\eta}{|\t|}P_\t^\perp\xi)=-\frac{2\pi\eta}{|\t|}|P_\t^\perp\xi|^2\neq 0
$, by our assumptions.  Therefore, if $q\geq 3$,
\begin{equation}\label{radiofonico}
\begin{aligned}
\det M \sim &\det
 \begin{bmatrix}
 W  x &  V_{q-1}x & d_\t x
 \\
Wy  & V_{q-1}y & d_\t y
\\
\mu_1(W t)    &   \mu_1  (V_{q-1} t) &\mu_1  d_\t t
\\ \vdots &\vdots &\vdots
\\
\mu_{q-2}(W t)    &  \mu_{q-2}  (V_{q-1} t)
&  \mu_{q-2} (d_\t t)
\end{bmatrix}.
\end{aligned}
\end{equation}
If $q=2$, we have instead the simpler form
\begin{equation*}
\begin{aligned}
\det M \stackrel{q=2}{\sim}  & =
\det \begin{bmatrix}
 V_1  x      & d_\t x
 \\
V_1  y &    d_\t y
\end{bmatrix}.
\end{aligned}
\end{equation*}

Next, for $k\leq q-1$ we have $V_k x=(-\langle\omega_k,\xi\rangle+\eta D_{\omega_k})\frac{2\pi}{|\t|}P_\t^\perp\xi=\frac{2\pi\eta}{|\t|}P_\t^\perp\omega_k=\frac{2\pi\eta}{|\t|} \omega_k$, because $\omega_k\perp \t $ for all    $k\leq q-1$. Easily, $V_k y=0$ and finally
\begin{equation*}
\begin{aligned}
 V_k t &=  \frac{\pi}{|\t|^3}(-\langle\xi,\omega_k\rangle\p_\eta+\eta D_{\omega_k})\Big((|P_\t\xi|^2+\eta^2)
 \t-2P_\t^\perp\xi\xi^T\t\Big)
\\&=\frac{\pi}{|\t|^3}\Big(-2\eta\langle\xi,\omega_k\rangle\t+2\eta\langle P_\t\xi,  P_\t\omega_k\rangle \t  -
2\eta P_\t^\perp\omega_k \xi^T\t -2\eta P_\t^\perp\xi \omega_k^T\t\Big)
\\& =-\frac{2 \pi \eta}{|\t|^3}\Big\{ \langle\xi,\omega_k\rangle\t+
    \langle \xi ,\t\rangle\omega_k \Big\}
 \end{aligned}
\end{equation*}
where we used $\omega_k\in\t^\perp$.
Therefore, for $j,k\leq q-2$ we have $\mu_j(V_k t)=\omega_j^T (V_kt)= -\frac{2\pi\eta}{|\t|^3}\langle \xi,\t\rangle  \omega_j^T\omega_k$. Introducing the matrix
$\Omega=[\omega_1,\dots, \omega_{q-2}]\in\R^{q\times (q-2)}$, if $q\geq 3$, we can write the
first $q-2$ columns of the matrix in~\eqref{radiofonico} as
 \begin{equation*}
\begin{aligned}
  \begin{bmatrix}
\frac{2\pi \eta}{|\t|}  \Omega
\\
0
\\
-\frac{2\pi \eta}{|\t|^3}\langle \xi,\t\rangle \Omega^T\Omega
 \end{bmatrix}\in\R^{(2q-1)\times(q-2)}.
\end{aligned}
 \end{equation*}
 Passing to the $(q-1)$-th column, since we let $\omega_{q-1}=P_\t^\perp\xi$,   computations above give  $V_{q-1}x=\frac{2\pi\eta}{|\t|}P_\t^\perp\omega_{q-1}=
\frac{2\pi\eta}{|\t|}P_\t^\perp\xi $. Furthermore,  $V_{q-1}y=0$ and
\[
 V_{q-1}t=-\frac{2\pi\eta}{|\t|^3}\big( \langle\xi, P_\t^\perp\xi\rangle\t +\langle\xi, \t\rangle P_\t^\perp\xi\big)\quad\Rightarrow\quad \mu_j(V_{q-1}t)=0\quad\forall\;j\leq q-2.
\]
Ultimately, the $(q-1)$-th column becomes
$
 \begin{bsmallmatrix}
 |\t|^{-1}2\pi\eta P_\t^\perp\xi \\0\\0
 \end{bsmallmatrix}\in\R^{ 2q-1 }.
$

Let us pass to the columns involving   $\p_{\t_\a}$. In all differentiations below, we omit all terms that vanish when $s=2\pi/|\t|$. The calculations of $\p_{\t_\a}\gamma(s,\xi,\eta,\t)$ for $s\neq \frac{2\pi}{|\t|}$ would be much longer.
\begin{equation*}
\begin{aligned}
 \p_{\t_\a}x\Big|_{s=\frac{2\pi}{|\t|}}  &=\p_{\t_\a } \bigg(
 \tfrac{2}{|\t|} \sin\big(\tfrac{|\t|s}{2}\big)
 \Big\{
 P_\t\xi\,
\cos\big(\tfrac{|\t|s}{2}\big)
-\frac{ \t\eta^T}{|\t|} \sin\big(\tfrac{|\t|s}{2}\big)
\Big\}
  + s
P_{\tau}^\perp \xi\bigg)\Big|_{s=\frac{2\pi}{|\t|}}
\\&
=\frac{2}{|\t|}\cos(\tfrac{|\t|s}{2})\frac s2\frac{\t_
\a}{|\t|}(-P_\t\xi)+\frac{2\pi}{|\t|}\p_{\t_\a}\Big(\xi-\frac{\t\t^T}{|\t|^2}\xi\Big)\Big|_{s=\frac{2\pi}{|\t|}}
\\&
= \frac{2\pi}{|\t|^3}\t_\a P_\t\xi+\frac{2\pi}{|\t|}
\Big(\frac{2}{|\t|^3}\frac{\t_\a}{|\t|} \t\t^T\xi-\frac{e_\a\t^T}{|\t|^2}\xi-\frac{\t e_\a^T}{|\t|^2}\xi\Big)
\\&
=\frac{2\pi}{|\t|^3}\big( 3\t_\a P_\t\xi- e_\a\langle\xi,\t\rangle-\t\xi_\a\big).
\end{aligned}
\end{equation*}
So, the $q\times q$ north east block is
$
 d_\t x |_{s=\frac{2\pi}{|\t|}}=\frac{2\pi}{|\t|^3}\big( 3\langle\xi,\t\rangle P_\t -\langle\xi,\t\rangle I_q-\t\xi^T\big)\in\R^{q\times q}
$.
An analogous computation furnishes $ \p_{\t_\a}y|_{s=2\pi/|\t|}=\frac{2\pi}{|\t|^3}\eta\t_\a$ for $\a=1,\dots, q$.
To conclude, for $q\geq 3$, we have to  calculate derivatives $\p_{\t_\a} t|_{s=2\pi/|\t|}$.  Keeping into account that  we need only to know    $\mu_j (\p_{\t_\a}t)=\omega_j^T \p_{\t_\a}t$ with $j\leq q-2$, since $\Span\{\omega_1,\dots, \omega_{q-2}\}=\{\t,P_\t^\perp\xi\}^\perp $, below we  work ignoring terms in $\Span\{\t,P_\t^\perp\xi\}$ writing below  $u\simeq u' $ when $u-u'\in\Span\{\t,P_\t^\perp\xi\}\subset\R^q$.
\begin{equation*}
\begin{aligned}
 \p_{\t_\a} t|_{s=2\pi/|\t|}&
 =
\p_{\t_\a}
 \bigg(  s^2 U\big(\tfrac{|\t|s}{2} \big)
\big\{|P_\t\xi|^2 +|\eta^2| \big\}\frac{\t}{|\t|}
\\& +s^2 V\big(\tfrac{|\t|s}{2} \big)
\Big\{- P_\t^\perp\xi \,\eta  \sin\big(\tfrac{|\t|s}{2}\big)
+P_\t^\perp\xi\frac{\xi^T\t}{|\t|}   \;
\cos \big(\tfrac{|\t|s}{2} \big)
\Big\}\bigg)
\\&
\simeq\Big(\frac{2\pi}{|\t|}\Big)^2 U(\pi)(|P_\t\xi|^2+\eta^2)\frac{1}{|\t|}\p_{\t_\a}\t
+ \Big(\frac{2\pi}{|\t|}\Big)^2 V(\pi) \p_{\t_\a}P_\t^\perp\xi\Big(\frac{\xi^T\t}{|\t|}\cos\pi\Big)
\\&=\frac{\pi}{|\t|^2}  (|P_\t\xi|^2+\eta^2)\frac{e_\a}{|\t|}
+\frac{2\pi}{|t|^2} \frac{e_\a\t^T \xi}{|\t|^2}\frac{\xi^T\t}{|\t|}
 =\frac{\pi}{|\t|^3}\big(\eta^2+3|P_\t\xi|^2\big) e_\a .
\end{aligned}
\end{equation*}
The south east $(q-2)\times q$-block has elements $(\mu_j(\p_{\t_\a} t))=\omega_j^T(d_{\t_\a} t))$, with
$j\leq q-2$ and $\a=1,\dots, q$. Ultimately,  the matrix in~\eqref{radiofonico} takes the form
\begin{equation*}
  M= \begin{bmatrix}
\frac{2\pi\eta}{|\t|} \Omega & \frac{2\pi \eta}{|\t|}P_\t^\perp\xi &
  \frac{ 2\pi}{|\t|^3}   \big( 3\langle\xi,\t\rangle P_\t -\langle\xi,\t\rangle I_q-  \t \xi^T\Big)
\\
0 & 0 &\frac{2\pi \eta}{|\t|^3}\t^T
\\
-\frac{2\pi \eta}{|\t|^3}\langle\xi,\t\rangle \Omega^T\Omega & 0
& \frac{\pi}{|\t|^3}\big(\eta^2+3|P_\t\xi|^2\big) \Omega^T
 \end{bmatrix}
\end{equation*}
where $\Omega=[\omega_1,\dots, \omega_{q-2}]\in\R^{q\times (q-2)}$. By linear algebra, \footnote{Note that the north east-term containing the factor  $P_\t=\frac{\t}{|\t|^2}\t^T$ can be eliminated by subtracting to each of its rows a suitable multiple of the $(q+1)$-th row.  All other simplifications are multiplications of some row/columns for nonzero scalars.}
$ M$ has full rank if and only if
\[
\wh M:= \begin{bmatrix}
 \Omega &  P_\t^\perp\xi &
-2 \big(  \langle\xi,\t\rangle I_q+  \t \xi^T\Big)
\\
0 & 0 & \t^T
\\
-\frac{ \langle\xi,\t\rangle }{|\t|^2} \Omega^T\Omega & 0
&  \big(\eta^2+3|P_\t\xi|^2\big) \Omega^T
 \end{bmatrix}\in\R^{(2q-1)\times(2q-1)}
\]
has full rank.
If $q=2$, we have the simpler matrix
\begin{equation}\label{facile}
\wh M\stackrel{q=2}{=}
 \begin{bmatrix}
    P_\t^\perp\xi &
-2 \big(  \langle\xi,\t\rangle I_2+  \t \xi^T\Big)
\\
  0 & \t^T
 \end{bmatrix}\in\R^{3\times 3}.
\end{equation}

To conclude the proof we prove the following claim.

\step{Claim.} The matrix $\wh M$ has  trivial kernel.

Proof of the claim for $q\geq 3$ and    $\langle \xi,\t\rangle=0$. We show that $\wt M$ has trivial kernel. Let     $a\in\R^{q-2}$, $b\in\R$ and $c\in\R^q$. If $\langle\xi,\t\rangle=0$, the system $\wh M\begin{bsmallmatrix}
                                 a\\b\\c
                                \end{bsmallmatrix}=0$ becomes
\begin{equation}\label{topo}
 \left\{ \begin{aligned}
&\Omega a+P_\t^\perp\xi b -2\t\langle\xi,c\rangle=0
\\& \langle \t,c\rangle=0
\\& \big(\eta^2+3|P_\t\xi|^2\big) \Omega^T c=0.
\end{aligned}
\right.
\end{equation}
By~\eqref{duale},   $\Omega a\in\Span\{\t,P_\t^\perp\xi\}$. Then, the first line gives three separate conditions and we get
\begin{equation}\label{gero}
 \left\{ \begin{aligned}
&\Omega a=0,\quad   b=0,\quad  \langle\xi,c\rangle=0
\\& \langle \t,c\rangle=0
\\& \big(\eta^2+3|P_\t\xi|^2\big) \Omega^T c=0
\end{aligned}
\right.
\end{equation}
Furthermore, since $\Omega=[\omega_1,\dots, \omega_{q-2}]$ has no kernel, we get  $a=0$. Concerning $c$, we see that  it is orthogonal both  to $\t$ and to $\Span\{\omega_1,\dots, \omega_{q-2}\}$, by the third line of~\eqref{gero}. Then, $c=\lambda P_\t^\perp \xi$ for suitable $\lambda\in\R$. Again from the first line, we get
$0=\langle\xi,c\rangle=\langle \xi,\lambda P_\t^\perp\xi\rangle =\lambda|P_\t^\perp\xi|^2 $,  which implies $c=0$ as we wished.

Proof of the claim, $q\geq 3$ and $\langle\xi,\t\rangle\neq 0$.  The system becomes
\begin{equation*}
 \left\{ \begin{aligned}
&\Omega a+P_\t^\perp\xi b  -2\langle\xi,\t\rangle c -2\t\langle\xi,c\rangle=0
\\& \langle \t,c\rangle=0
\\& -\frac{\langle\xi,\t\rangle}{|\t|^2}\Omega^T\Omega a +\big(\eta^2+3|P_\t\xi|^2\big) \Omega^T c=0.
\end{aligned}
\right.
\end{equation*}
Multiply from the left  the first line of~\eqref{topo} by $\frac{\langle\xi,\t\rangle}{|\t|^2}\Omega^T$ and add to the third. After elementary simplifications based on property $\Omega^T \t=\Omega^T P_\t^\perp\xi=0$, this gives
$ (\eta^2+ |P_\t\xi|^2  )\Omega^Tc=0.
$
Thus, $\Omega^Tc=0$ which implies $c\in\Span \{\t,P_\t^\perp\xi\}$. Arguing as above, we write   $c=\lambda P_\t^\perp\xi$ and   projecting orthogonally the first line along $\t$, we get  $c=0$. The third line gives  also $\Omega^T\Omega a=0$, which implies $a=0$ (independence of $\omega_1,\dots, \omega_{q-2}$ in $\R^q$  implies that $\Omega^T\Omega$ is nonsingular).  The first line gives now  $b=0$ and the proof is finished.

Proof of the claim, $q=2$. The system $\wh M \begin{bsmallmatrix}
                                                        b\\c
                                                       \end{bsmallmatrix}
$ with $b\in\R$ and $c\in\R^2$ becomes
\begin{equation*}
 \left\{
 \begin{aligned}
 &P_\t^\perp\xi b-2\langle\xi,\t\rangle c-2\t\langle\xi,c\rangle=0
 \\
 & \t^T c=0.
        \end{aligned}
\right.
\end{equation*}
Here $\R^2=\Span\{\t,P_\t^\perp\xi\}$ and the second line gives $c=\lambda P_\t^\perp\xi$ for some $\lambda$. Then the first becomes $P_\t^\perp\xi(b-2\langle\xi,\t\rangle c)-2\t\lambda|P_\t^\perp\xi|^2=0$, which by independence of $\t$ and $P_\t^\perp\xi$ provides  $\lambda=b=0$. This concludes the proof of the Theorem~\ref{taglia}.
\end{proof}

We are ready to prove Theorem~\ref{loggia}.

\begin{proof}
 Let $q\geq 2$ and $p=1$ and let $(x,0,t)\in\Cut(\G_{q1})$. Write $(x,0,t)=\gamma(2\pi,\xi,\eta, \t)$ with $|\t|=1$ and $\eta^2+|P_\t^\perp\xi|^2\neq 0$.    We know by Proposition~\ref{piuno} and Theorem~\ref{taglia}  that  this point is conjugate if and only if $\eta|P_\t^\perp\xi|=0$.  Condition  $P_\t^\perp\xi=0$  is equivalent to  $x=0$, by the first line of~\eqref{ovest}.  If $x\neq 0$, then $\ker x$ is trivial, and Remark~\ref{spiego} holds.  In particular, by~\eqref{spiegazione}, condition  $\eta =0$ holds   if and only if
$|P_{\Im x}^\perp t|-\pi|x^\dag t|^2=0$, as required. \color{black} Note that here we have $|x^\dag t|^2=\frac{\langle x,t\rangle^2}{|x|^4}$.
\end{proof}

\appendix
\section{Appendix}

The following  lemma on quadratic Hamiltonian systems has been used in the proof of Theorem~\ref{taglia}. We include it for completeness. This lemma is general and we use then standard Hamiltonian notation.
\begin{lemma}\label{liouville}
Let $(x,p)\in\R^N\times\R^N\simeq T^*(\R^N)$ and consider the Hamiltonian
\begin{equation}\label{amilto}
H(x,p)=\frac 12   \langle M(x)p , p\rangle
\end{equation}
where  $M (x)\in\R^{N\times N}$ is symmetric, positive semidefinite and depends smoothly on $x$.
Denote by    $\Lambda=\{ p\in\R^n =T^*_{0}\R^N : H(0,p)=\frac 12\}$.  Given $p\in\Lambda $,  denote by $t\in I \mapsto (X(t,p), P(t,p))$  the solution of the Hamiltonian system with initial data $(X(0,p), P(0,p))=(0,p)$, where $I\subset \R$ is an interval containing  $0$.   Let $\sigma\in\R\mapsto p(\s) \in \Lambda$ be a smooth  path. Then we have
\begin{equation*}\sum_j P_j(t, p(\s))\frac{\p  }{\p\s } X_j(t, p(\s))=
\sum_j P_j(0, p(\s))\frac{\p  }{\p\s } X_j(0, p(\s))=0\quad\forall\;t,\s.\label{tengential}
\end{equation*}
In other words, the Liouville form $\ell :=\sum_j p_j dx_j$ satisfies $\ell  (\frac{\p}{\p\s}X(t, p_\s))=0$, for all $t,\s$.
\end{lemma}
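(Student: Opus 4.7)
The plan is to define $\phi(t,\sigma):=\ell(\partial_\sigma X(t,p(\sigma)))=\sum_j P_j(t,p(\sigma))\,\partial_\sigma X_j(t,p(\sigma))$ and show that $\phi$ vanishes at $t=0$ and is constant in $t$. For the initial condition, I will simply observe that $X(0,p)=0$ for every $p$, hence $\partial_\sigma X(0,p(\sigma))=0$, so $\phi(0,\sigma)=0$ for every $\sigma$.

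For the $t$-independence, I would differentiate $\phi$ using the Hamilton equations $\dot X_j=\partial_{p_j}H$ and $\dot P_j=-\partial_{x_j}H$, obtaining
\begin{equation*}
\partial_t\phi=-\sum_j\partial_{x_j}H\,\partial_\sigma X_j+\sum_{j,k}P_j\,\partial^2_{x_k p_j}H\,\partial_\sigma X_k+\sum_{j,k}P_j\,\partial^2_{p_k p_j}H\,\partial_\sigma P_k.
\end{equation*}
Here the key input is that $H$ is homogeneous of degree two in $p$: applying Euler's identity $\sum_j p_j\partial_{p_j}H=2H$ and differentiating it once in $p_k$ and once in $x_k$ yields $\sum_j p_j\,\partial^2_{p_k p_j}H=\partial_{p_k}H$ and $\sum_j p_j\,\partial^2_{x_k p_j}H=2\,\partial_{x_k}H$. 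Substituting these into the expression for $\partial_t\phi$ gives
\begin{equation*}
\partial_t\phi=\sum_k\partial_{x_k}H\,\partial_\sigma X_k+\sum_k\partial_{p_k}H\,\partial_\sigma P_k=\partial_\sigma\bigl(H(X(t,p(\sigma)),P(t,p(\sigma)))\bigr).
\end{equation*}
Since $p(\sigma)\in\Lambda$ and the Hamiltonian is conserved along trajectories, the function $H(X(t,p(\sigma)),P(t,p(\sigma)))$ equals $1/2$ for every $t,\sigma$, so its $\sigma$-derivative is zero. Thus $\partial_t\phi\equiv 0$, which combined with $\phi(0,\sigma)=0$ gives $\phi\equiv 0$.

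Conceptually, this is just the statement that the Liouville form $\ell$ satisfies $\mathcal{L}_{X_H}\ell=dH$ (because $\iota_{X_H}\ell=2H$ by Euler and $\iota_{X_H}d\ell=-dH$), so along the Hamiltonian flow the pullback of $\ell$ evolves by $dH$, which annihilates vectors tangent to the energy level set $\Lambda$. The only mild obstacle is the bookkeeping of the two Euler identities for the mixed second derivatives; everything else is the standard energy-conservation argument. No other structural property of $M(x)$ beyond homogeneity of degree two in $p$ is used, so positive semidefiniteness is in fact not needed for this identity.
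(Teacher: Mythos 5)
Your proof is correct and uses the same ingredients as the paper's: the trivial vanishing at $t=0$ from $X(0,p)\equiv 0$, Hamilton's equations, degree-two homogeneity in $p$ (Euler's identity), and conservation of $H$ together with $H\equiv\tfrac12$ on $\Lambda$. The only difference is organizational — the paper first uses $\partial_\sigma[H(X,P)]=0$ to trade $\partial_tP_j\,\partial_\sigma X_j$ for $\partial_tX_j\,\partial_\sigma P_j$ and applies Euler once at the end, while you apply the two Euler identities directly to the mixed second derivatives and then invoke conservation; your aside that positive semidefiniteness of $M$ is irrelevant (only homogeneity is used) is a correct and worthwhile observation.
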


\begin{proof}
Write briefly $X_j=X_j(t, p(\s))$ and $P_j=P_j(t, p(\s))$ and omit summation on index $j\in\{1,\dots,N\}$. We must see that the following is zero:
\[
(*):=\p_t(P_j \p_\s X_j )=\p_t P_j \p_\s X_j+ P_j\p_t\p_\s X_j
 \]
Since the Hamiltonian is constant along flow, we get
$
H(X , P )=   H(0,p(\s))=\frac 12
$ identically in $t,\s$.
Differentiating with respect to $\sigma$ gives
\[
0=\p_{x_j}H(X ,P )\p_\s X_j+\p_{p_j}H(X ,P )\p_\s P_j=-\p_tP_j \p_\s X_j+ \p_t X_j\p_\s P_j.
\]
Thus we may rewrite $(*)$ as
\[\begin{aligned}(*)&= \p_t X_j\p_\s P_j+ P_j\p_t\p_\s X_j= \p_t X_j\p_\s P_j+ P_j\p_\s\p_t X_j
=\p_\s (P_j\p_t X_j)=\p_\s (P_j\p_{p_j}H)
\\&\stackrel{(\dag)}{=} \p_\s H(X ,P)   = \p_\s  H(0, p(\s))=0.
\end{aligned}
\]
In $\stackrel{(\dag)}{=}$  we used  the form~\eqref{amilto} of the Hamiltonian.
\end{proof}

\section*{Acknowledgements}
 The authors are   supported by PRIN 2022 F4F2LH - CUP J53D23003760006 ``Regularity problems in sub-Riemannian structures''.

 The authors are also members of the {\it Gruppo Nazionale per
l'Analisi Matematica, la Probabilit\`a e le loro Applicazioni} (GNAMPA)
of the {\it Istituto Nazionale di Alta Matematica} (INdAM).

 The auhors thank Ye Zhang, for a careful reading of the first version of the manuscript and for suggesting some improvements.


\footnotesize




\def\cprime{$'$} \def\cprime{$'$}
\providecommand{\bysame}{\leavevmode\hbox to3em{\hrulefill}\thinspace}
\providecommand{\MR}{\relax\ifhmode\unskip\space\fi MR }
\providecommand{\MRhref}[2]{%
  \href{http://www.ams.org/mathscinet-getitem?mr=#1}{#2}
}
\providecommand{\href}[2]{#2}

\normalsize

\end{document}